\title[Regularity for a degenerate heat equation]{Uniqueness and root-Lipschitz regularity\\for a degenerate heat equation}
\author{Alexander Dunlap}
\address{AD: Department of Mathematics, Duke University, 120 Science Dr, Durham, NC 27708, USA}
\email{\tt dunlap@math.duke.edu}
\author{Cole Graham}
\address{CG: Division of Applied Mathematics, Brown University, 182 George St, Providence, RI 02906, USA}
\email{\tt cole\_graham@brown.edu}
\begin{document}
\begin{abstract}
  We consider nonnegative solutions of the quasilinear heat equation $\partial_t u = \tfrac{1}{2} u\partial_x^2 u$ in one dimension.
  Our solutions may vanish and may be unbounded.
  The equation is then degenerate, and weak solutions are generally nonunique.
  We introduce a notion of strong solution that ensures uniqueness.
  For suitable initial data, we prove a lower bound on the time for which a strong solution $u$ exists and $\sqrt{u}$ remains globally Lipschitz in space.
  In a companion paper, we show that this condition is important in the study of two-dimensional nonlinear stochastic heat equations.
\end{abstract}
\maketitle

\section{Introduction}

We study nonnegative solutions of the one-dimensional nonlinear heat equation
\begin{equation}
  \label{eq:main}
  \partial_t u = \frac{1}{2} u \partial_x^2 u, \quad u(0, \anon) = u_0
\end{equation}
for $(t,x) \in [0, \infty) \times \R.$
This equation degenerates where $u = 0$.
As a consequence, \eqref{eq:main} can behave poorly: its solutions can form singularities and need not be unique.
In this paper, we constrain the onset of such pathologies through a new solution theory and quantitative lower bounds on the blow-up time of \eqref{eq:main}.

We employ these bounds in a companion paper~\cite{DG} on 2D nonlinear stochastic heat equations.
There, the deterministic one-dimensional equation \eqref{eq:main} arises as an effective equation governing certain pointwise statistics.
In this probabilistic context, the solution $u$ of \eqref{eq:main} represents the decoupling function of a forward-backward SDE, and hence can be thought of as a variance.
The corresponding ``standard deviation'' $\sqrt{u}$ is therefore also of interest.
In particular, the condition that $\sqrt{u}$ be Lipschitz in $x$---a property we call ``root-Lipschitz''---plays a major role in our analysis in~\cite{DG}.
Here, we show that under suitable conditions on the initial data, $\sqrt{u}$ remains uniformly Lipschitz in space for a quantitative period of time.

To accommodate potential singularities, one can treat \eqref{eq:main} in a weak sense.
\begin{definition}
  Given $T > 0$, we say $u \in \m{C}\big([0, T); W_{\loc}^{1,\infty}(\R)\big)$ is a \emph{weak solution} of \eqref{eq:main} on the time interval $[0, T)$ if $u \geq 0$ and for all $\psi \in \m{C}_\cc^\infty([0, T) \times \R)$, we have
  \begin{equation}
    \label{eq:weak}
    \int_{\R} u_0 \psi(0, \anon) + \int_{[0, T) \times \R} \left[u \partial_t \psi - \tfrac{1}{2}\partial_x(\psi u) \partial_x u\right] = 0.
  \end{equation}
\end{definition}
Given suitable growth bounds on the initial condition, the existence of weak solutions follows from a vanishing viscosity construction; see~\cite{DPL87} for details on a bounded domain.
Moreover, interior parabolic estimates ensure that solutions of \eqref{eq:main} are qualitatively smooth on the positive set $\{u > 0\}$ (Lemma~\ref{lem:positive}).
However, this regularity alone does not imply uniqueness.
In~\cite{DPL87}, the authors show that weak solutions are highly nonunique when $\{u = 0\}$ is nonempty.
The weak formulation permits the zero set of $u$ to ``invade'' the support at arbitrary speed (for another form of nonuniqueness, see~\cite{BDPU92}).
Thus, further conditions are necessary to recover even local-in-time well-posedness for \eqref{eq:main}.
We introduce one such condition.

In the following, let $\braket{x} \coloneqq (x^2 + 1)^{1/2}$ denote the Japanese bracket.
Given $\ell \in \R$, we let $\braket{x}^\ell L^\infty$ denote the weighted space of functions $f$ such that $\braket{x}^{-\ell} f \in L^\infty$.
We extend this notation to other spaces by analogy.
\begin{definition}
  Given $\bar{T} > 0$, a weak solution of \eqref{eq:main} on the time interval $[0, \bar{T})$ is a \emph{strong solution} if for all $T \in (0, \bar{T})$, we have $u \in \braket{x}^2 L^\infty([0, T] \times \R)$ and $\partial_x^2 u \in L^1\big([0, T]; L^\infty(\R)\big)$.
\end{definition}
We prove a comparison principle for strong solutions (Proposition~\ref{prop:comparison}) and conclude that they are unique:
\begin{theorem}
  \label{thm:unique}
  If $u_1$ and $u_2$ are strong solutions of \eqref{eq:main} on a common time interval with the same initial data, then $u_1 = u_2$.
\end{theorem}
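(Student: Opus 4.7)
The plan is to deduce Theorem~\ref{thm:unique} as a direct corollary of the comparison principle asserted in Proposition~\ref{prop:comparison}. Suppose $u_1$ and $u_2$ are strong solutions of \eqref{eq:main} on a common time interval $[0, \bar T)$ with $u_1(0, \cdot) = u_2(0, \cdot) = u_0$. Since the common initial data trivially satisfies $u_0 \le u_0$, I would apply the comparison principle to the ordered pair $(u_1, u_2)$ to obtain $u_1 \le u_2$ on $[0, \bar T) \times \R$, and then apply it again to the ordered pair $(u_2, u_1)$ to obtain $u_2 \le u_1$. Combined, the two inequalities force $u_1 = u_2$.

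The only thing to verify is that the hypotheses of Proposition~\ref{prop:comparison} are satisfied for both orderings $(u_1,u_2)$ and $(u_2,u_1)$. This is automatic, because the strong-solution definition is symmetric: the growth bound $u \in \braket{x}^2 L^\infty([0,T] \times \R)$ and the regularity $\partial_x^2 u \in L^1([0,T]; L^\infty(\R))$ are demanded of each $u_i$ individually rather than of the pair, and no ordering between the two solutions is presupposed. Thus comparison applies in both directions with no further input.

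Because the argument is a two-line invocation of comparison, there is no genuine obstacle at the level of this statement; all of the analytical difficulty is deferred to the proof of Proposition~\ref{prop:comparison}. There, the central issue will be to control the degenerate linear equation
\begin{equation*}
  \partial_t w = \tfrac{1}{2} u_1 \partial_x^2 w + \tfrac{1}{2} (\partial_x^2 u_2)\, w
\end{equation*}
satisfied by $w = u_1 - u_2$, whose diffusion coefficient $u_1$ may vanish and whose zeroth-order coefficient $\partial_x^2 u_2$ lies only in $L^1_t L^\infty_x$. The strong-solution hypotheses appear calibrated for a weighted Gronwall estimate on $w^{\pm}$: the $L^1_t L^\infty_x$ bound on $\partial_x^2 u$ absorbs the zeroth-order term via Gronwall, while the quadratic spatial growth of $u$ dictates the appropriate decaying weight (say, exponentially decaying in $\braket{x}$) against which the test-function derivatives arising from integration by parts remain integrable. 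That, however, is the work for Proposition~\ref{prop:comparison}, not for Theorem~\ref{thm:unique}.
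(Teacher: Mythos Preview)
Your proposal is correct and matches the paper's own proof essentially verbatim: the paper applies Proposition~\ref{prop:comparison} with $(u^-,u^+)=(u^1,u^2)$ to get $u^1\le u^2$, then invokes symmetry to conclude $u^1=u^2$. Your additional commentary on what Proposition~\ref{prop:comparison} requires is accurate but not needed for this step.
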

\noindent
In fact, this is essentially a consequence of Proposition~1.14 in our companion paper~\cite{DG}; the probabilistic proof given there is quite different.

Strong solutions satisfy a number of other convenient qualitative properties.
For example, the zero set of solutions plays a major role in the analysis of \eqref{eq:main}.
The support of weak solutions does not grow in time~\cite{DPL87}, but isolated zeros can disappear~\cite{Ugh86}.
In contrast, we show that strong solutions exactly preserve the zero set in time (Proposition~\ref{prop:zeros}).
As a result, distinct intervals of positivity do not interact with one another (Corollary~\ref{cor:strong-cut-paste}), so the analysis of strong solutions reduces to the study of positive strong solutions on connected intervals.

Up to translation and reflection, the only such intervals are $\R$, $\R_+$, and $(0, L)$ for some $L > 0$.
Let $I$ be one of these intervals and let $d(x) \coloneqq \op{dist}(x, I^\cc)$ denote the distance to the complement $I^\cc$.
Let $\op{Lip}f$ denote the global Lipschitz constant of a function $f \colon \R \to \R$.
Given $\kappa\in(0,\infty)$ and $\gamma\in [0, 2]$, we consider initial conditions satisfying the following.
\DeclareRobustCommand{\Ibind}{I}
\DeclareRobustCommand{\kappabind}{\kappa}
\DeclareRobustCommand{\gammabind}{\gamma}
\begin{customhyp}{H$(\kappabind,\gammabind)$}
  \label{hyp:init}
  We have $u_0 \geq 0$, $u_0|_{I^\cc} \equiv 0$, and $\op{Lip} \sqrt{u_0} < \infty$.
  Moreover, there exists $K \in [1, \infty)$ such that the following hold for all $x \in I$.
  \begin{enumerate}[wide, labelindent=3pt, itemsep = 2ex, label = \textup{(\alph*)}]
  \item
    If $I = (0, L)$, then
    \begin{equation}
      \label{eq:hyp-int}
      K^{-1} d(x)^2 \leq u_0(x) \leq \kappa d(x)^2.
    \end{equation}

  \item
    If $I = \R$, then
    \begin{equation}
      \label{eq:hyp-line}
      K^{-1} \braket{x}^\gamma \leq u_0(x) \leq \min\{\kappa x^2 + K, K\braket{x}^\gamma\}.
    \end{equation}

  \item
    If $I = \R_+$, then
    \begin{equation}
      \label{eq:hyp-half}
      K^{-1}(x^2 \wedge x^\gamma) \leq u_0(x) \leq \min\{\kappa x^2, K x^\gamma\}.
    \end{equation}
  \end{enumerate}
\end{customhyp}
The regularity condition $\op{Lip}\sqrt{u_0} < \infty$ is motivated by our interest in root-Lipschitz solutions.
In terms of growth, if $I$ has boundary, then $u_0$ grows quadratically relative to $\partial I$.
If $I$ is unbounded, then $u_0$ grows like a power law at infinity.
In each case, growth is capped at the quantitative rate $\kappa x^2$ for some $\kappa \in (0, \infty)$.
Our main result states that \eqref{eq:main} admits a strong and root-Lipschitz solution at least until time $\kappa^{-1}$.
\begin{theorem}
  \label{thm:root-Lip}
  Let $\kappa \in (0, \infty)$ and $\gamma \in [0, 2]$.
  If $u_0$ satisfies Hypothesis~\textup{\ref{hyp:init}}, then there exists a strong solution $u$ of \eqref{eq:main} on $[0, \kappa^{-1})$ such that for all $T \in [0, \kappa^{-1})$,
  \begin{equation}
    \label{eq:root-Lip}
    \sup_{t \in [0, T]} \op{Lip} \sqrt{u(t, \anon)} < \infty.
  \end{equation}
\end{theorem}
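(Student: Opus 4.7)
The plan is to construct the solution by vanishing viscosity, establish uniform estimates valid on $[0,\kappa^{-1})$, and pass to the limit. First I would approximate $u_0$ by a smooth sequence $u_0^\delta$ that respects the bounds of Hypothesis~\ref{hyp:init} up to $o(1)$ errors and has $\op{Lip}\sqrt{u_0^\delta}$ uniformly bounded. After adding a small viscosity $\delta\partial_x^2 u$ to \eqref{eq:main}, the regularized equation is uniformly parabolic and classical theory provides smooth solutions $u^\delta$; the task is to control these uniformly in $\delta$.

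The lifetime $\kappa^{-1}$ originates from an explicit barrier. Direct computation shows that $\bar u(t,x) \coloneqq (\kappa x^2 + K)/(1-\kappa t)$ solves \eqref{eq:main} exactly on $\R \times [0,\kappa^{-1})$, with analogous barriers for $I = \R_+$ and $I = (0,L)$. Choosing the approximation so that $u_0^\delta \leq \bar u(0,\anon)$ and applying the comparison principle (Proposition~\ref{prop:comparison}) yields $u^\delta \leq \bar u$ throughout $[0,\kappa^{-1})$. A matching subsolution prevents degeneration of the positive set on compact subsets of $I$.

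The crux of the argument is the root-Lipschitz estimate. Writing $v = \sqrt{u^\delta}$ and $p = (\partial_x v)^2$, a Bernstein-type computation yields
\[
\partial_t p - \frac{v^2}{2}\partial_x^2 p - 2 v (\partial_x v)\,\partial_x p + v^2 (\partial_x^2 v)^2 = p^2.
\]
At a spatial maximum of $p$ the drift and diffusion terms vanish, leaving only $\tfrac{d}{dt}\sup_x p \leq (\sup_x p)^2$. This naive ODE would permit $\sup_x p$ to blow up at time $1/\sup_x p(0,\anon)$, which is insufficient, since $\op{Lip}\sqrt{u_0}$ need not be comparable to $\sqrt{\kappa}$. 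The main obstacle is therefore to refine the inequality to cover all of $[0,\kappa^{-1})$. I would approach this by constructing a supersolution of the form $\Psi(t,x) = \alpha(t) + \beta(t)\bar u(t,x)$ with coefficients tuned so that the reaction $p^2$ is absorbed using the pointwise bound $u^\delta \leq \bar u$ and the explicit form of $\partial_x\bar u$ and $\partial_x^2\bar u$. Cases $I = \R_+$ and $I = (0,L)$ should reduce via the zero-set preservation (Proposition~\ref{prop:zeros}) to a localized version of the same estimate, possibly after reflecting across the boundary.

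Finally, the uniform upper, lower, and Lipschitz bounds permit extraction of a convergent subsequence $v^\delta \to v$ by Arzel\`a--Ascoli (using the spatial Lipschitz bound together with local parabolic regularity on positivity regions); the limit $u = v^2$ is a weak solution of \eqref{eq:main}. The upper barrier places $u \in \braket{x}^2 L^\infty$, and the Lipschitz bound on $v$ combined with \eqref{eq:main} itself yields the required $L^1\big([0,T];L^\infty(\R)\big)$ control on $\partial_x^2 u$, confirming that $u$ is strong.
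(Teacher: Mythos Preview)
Your proposal has two genuine gaps, and the approach diverges substantially from the paper's.

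\textbf{The Bernstein route does not reach $\kappa^{-1}$.} You correctly observe that the naive maximum-principle argument on $p=(\partial_x\sqrt{u})^2$ only yields existence until $(\op{Lip}\sqrt{u_0})^{-2}$, which can be much smaller than $\kappa^{-1}$. Your proposed fix, a supersolution of the form $\Psi=\alpha(t)+\beta(t)\bar u$, does not close: the reaction term in the $p$-equation is $p^2$, so you would need $\Psi^2$ to be dominated by the linear operator applied to $\Psi$, but $\Psi^2$ contains $\beta^2\bar u^2\sim x^4$ while $v^2\partial_x^2\Psi$ and $v\partial_xv\,\partial_x\Psi$ grow at most like $x^2$. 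The good term $-v^2(\partial_x^2 v)^2$ cannot be exploited without additional control on $\partial_x^2 v$, which is precisely what you do not yet have. The paper sidesteps this entirely: it introduces a change of variables $v(t,y)=\zeta'(y)^{-2}u(t,\zeta(y))$, with $\zeta$ chosen according to $I$ and $\gamma$, that converts \eqref{eq:main} into a uniformly parabolic equation with bounded coefficients on all of $\R$. The barriers then give $v\asymp 1$, and standard Krylov--Safonov plus Schauder estimates (followed by a short-time argument on $\partial_y v$) yield $|\partial_y v|\lesssim 1$ and $|\partial_y^2 v|\lesssim t^{-1/2}$; translating back gives both the root-Lipschitz bound and $\partial_x^2 u\in L^1_tL^\infty_x$ on the full interval $[0,\kappa^{-1})$.

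\textbf{Root-Lipschitz does not imply strong.} Your final paragraph asserts that the Lipschitz bound on $\sqrt{u}$ ``combined with \eqref{eq:main} itself yields the required $L^1([0,T];L^\infty(\R))$ control on $\partial_x^2 u$.'' This implication is not known; the paper explicitly flags the question of whether root-Lipschitz solutions are unique (equivalently, strong) as open. The paper does not attempt to deduce the $\partial_x^2 u$ bound from root-Lipschitz; it obtains both simultaneously from the transformed equation, where $|\partial_y^2 v|\lesssim t^{-1/2}$ translates directly to $|\partial_x^2 u|\lesssim t^{-1/2}$.

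A minor point: the paper regularizes by shifting the initial data, $u_0^\eps=u_0+\eps$, rather than by adding a viscosity term to the equation. This keeps the equation in its original form, so the explicit quadratic barriers remain exact solutions and the comparison principle (Proposition~\ref{prop:comparison}) applies cleanly.
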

\noindent
Notably, although Hypothesis~\textup{\ref{hyp:init}} includes the assumption $\op{Lip} \sqrt{u_0} < \infty$, the lower bound $\kappa^{-1}$ on the strong existence time does not depend on the Lipschitz constant of $\sqrt{u_0}$.
\begin{remark}
  If $I = \R$ and $\gamma < 2$, Hypothesis~\ref{hyp:init} does not involve $\kappa$: if $u_0$ satisfies Hypothesis~\ref{hyp:init}, then it does so for all $\kappa \in \R_+$.
Theorem~\ref{thm:root-Lip} then implies that \eqref{eq:main} admits a \emph{global-in-time} strong, root-Lipschitz solution.
\end{remark}
To prove Theorem~\ref{thm:root-Lip}, we transform \eqref{eq:main} into a uniformly parabolic equation via a change of variables depending on $I$ and $\gamma$; see Section~\ref{sec:CoV} for more details.
This transformation allows us to rule out two distinct pathologies before time~$\kappa^{-1}$: runaway growth due to mass moving in from infinity, and the loss of root-Lipschitz regularity near the zero set $\{u_0 = 0\}$.
We refer to these as ``blow-up at infinity'' and ``blow-up at zero,'' respectively.

To simplify the following discussion, let us restrict our attention to $\R_+$, on which $u$ could potentially exhibit both forms of blow-up.
To prevent both, we make essential use of the quadratic bound $u_0(x) \leq \kappa x^2$.
This is clearly relevant for blow-up at infinity---if $u_0$ grows too quickly, the large diffusion may move mass in from infinity in finite time.
Indeed, \eqref{eq:main} has an explicit solution $\frac{\kappa x^2}{1 - \kappa t}$ that blows up precisely at time $\kappa^{-1}$.
For further discussion in a probabilistic context, see Section~1.3 of~\cite{DG22}.

Blow-up at zero is more subtle.
It is far from clear that $u_0(x) \leq \kappa x^2$ is necessary to prevent blow-up at zero.
After all, we wish to maintain the conditions
\begin{equation*}
  \partial_x u(t, 0_+) \eqqcolon r(t) = 0 \And \partial_x^2 u(t, 0_+) \eqqcolon s(t) < \infty
\end{equation*}
that hold initially.
Differentiating \eqref{eq:main} and evaluating at $x = 0$ (where $u = 0$), we \emph{formally} find
\begin{equation}
  \label{eq:tricksy}
  \dot r = \frac{1}{2} sr \And \dot s = \frac{1}{2}s^2.
\end{equation}
So long as $s < \infty$, it seems that $r = 0$ and $u$ remains quadratic to leading order near $x = 0$.
Integrating \eqref{eq:tricksy} in time, it \emph{appears} that the condition $s(0) \leq 2\kappa$ will prevent blow-up at zero before time $\kappa^{-1}$.
This suggests that if $u_0$ has a suitable second derivative at the origin, then large values away from the origin shouldn't pose a problem.
As we will see, this is not the case.

In \eqref{eq:hyp-half} of Hypothesis~\ref{hyp:init}, we have assumed that
\begin{equation}
  \label{eq:sharp-quad}
  u_0(x) \leq \kappa x^2
\end{equation}
on the entire half-line.
Our formal analysis above might suggest that this is overly strict: perhaps one could merely impose \eqref{eq:sharp-quad} in neighborhoods of $x = 0$ and $\infty$ (to prevent blow-up at infinity).
This intuition is misleading.
Violations of \eqref{eq:sharp-quad} on compact subsets of $\R_+$ can destroy the quadratic behavior near $x = 0$ and thus cause blow-up at zero.
In short, the closed system of ODEs \eqref{eq:tricksy} is deceptive: mass can move from the bulk to $x = 0$ in finite time and disrupt the evolution of $r$ and~$s$.
(This disruption is always detrimental, in the sense that moving mass cannot \emph{extend} the solution lifetime beyond the first blow-up of \eqref{eq:tricksy}; see Proposition~\ref{prop:blow-up}.)
\begin{theorem}
  \label{thm:blow-up}
  For each $\kappa \in (0, \infty)$, there exists a nonnegative profile $\vartheta \in \m{C}_\cc^\infty(\R_+)$ such that if $u_0(x) = \kappa x^2 + \vartheta(x)$, then the unique strong solution $u$ of \eqref{eq:main} has maximal existence time $T_* \in (0, \kappa^{-1})$ and
  \begin{equation}
    \label{eq:not-root-Lip}
    \sup_{t \in [0, T_*)} \op{Lip} \sqrt{u(t, \anon)} = \infty.
  \end{equation}
\end{theorem}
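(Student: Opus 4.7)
By the scaling symmetry $(t, x, u) \mapsto (\lambda t, x, u/\lambda)$ of \eqref{eq:main}, I normalize $\kappa = 1$. Let $\vartheta \in \m{C}_\cc^\infty(\R_+)$ be any nonnegative nontrivial bump; I expect any such choice to work. Let $u$ be the unique strong solution with initial data $u_0 = x^2 + \vartheta$ on its maximal existence interval $[0, T_*)$, and let $V(t, x) \coloneqq x^2/(1-t)$ be the self-similar solution with data $x^2$. Since $u_0 \geq V(0, \anon)$, Proposition~\ref{prop:comparison} yields $u \geq V$ on $[0, T_*)$, so $T_* \leq 1$ automatically. The substantive content of the theorem is therefore the strict inequality $T_* < 1$ together with the root-Lipschitz blow-up~\eqref{eq:not-root-Lip}.

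I argue by contradiction, assuming $u$ is a strong root-Lipschitz solution on $[0, 1)$. Introduce the excess $w \coloneqq u - V \geq 0$, with $w(0, \anon) = \vartheta$. By Proposition~\ref{prop:zeros} and Lemma~\ref{lem:positive}, $w$ is smooth on $(0, T_*) \times (0, \infty)$ and satisfies the linear equation
\[
  \partial_t w = \tfrac{1}{2} V \partial_x^2 w + \tfrac{1}{2} (\partial_x^2 u)\, w,
\]
whose diffusion coefficient $V$ vanishes quadratically at $x = 0$. The root-Lipschitz hypothesis forces $w/x^2 \leq \op{Lip}\sqrt{u(t, \anon)}^2$, so $\liminf_{x \to 0^+} w(t, x)/x^2$ is finite for $t \in [0, 1)$. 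The crucial claim is that it is \emph{strictly positive} at some $t_0 \in (0, 1)$. To establish this, I pass to the change of variables from Section~\ref{sec:CoV}, under which \eqref{eq:main} (and hence the equation for $w$) becomes uniformly parabolic on a straightened domain, with the root-Lipschitz condition corresponding to boundedness up to the boundary. The transformed $w$ is then a classical supersolution of a uniformly parabolic equation with smooth coefficients and nontrivial nonnegative initial data, and the standard Hopf boundary lemma produces a strictly positive normal derivative at the straightened boundary. Undoing the transformation yields $\liminf_{x \to 0^+} w(t_0, x)/x^2 \eqqcolon c > 0$.

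With $\mu_0 \coloneqq 1/(1-t_0) + c > 1/(1-t_0)$ in hand, I compare $u$ locally with $\widetilde V(t, x) \coloneqq (\mu_0 - \epsilon) x^2 / \bigl(1 - (\mu_0 - \epsilon)(t - t_0)\bigr)$ for small $\epsilon > 0$ on a small interval $[0, \delta] \times [t_0, t_0 + 1/(\mu_0 - \epsilon))$. Choose $\delta$ small enough that $u(t_0, x) \geq (\mu_0 - \epsilon) x^2$ on $[0, \delta]$, possible by the strict positivity of $c$; the positivity $u(t, \delta) > 0$ provides room at the artificial boundary $x = \delta$ to paste $\widetilde V$ below $u$ via a smooth cutoff. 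A localized variant of Proposition~\ref{prop:comparison} then gives $u \geq \widetilde V$ on $[0, \delta]$ for $t \in [t_0, t_0 + 1/(\mu_0 - \epsilon))$. Since $\mu_0 - \epsilon > 1/(1-t_0)$ for $\epsilon$ small, $\widetilde V$ blows up at $t_0 + 1/(\mu_0 - \epsilon) < 1$, forcing $u$ to cease being a strong, root-Lipschitz solution before $t = 1$. This simultaneously establishes $T_* < 1$ and \eqref{eq:not-root-Lip}, since $\op{Lip}\sqrt{u(t, \anon)} \geq \sqrt{\widetilde V(t, x)}/x \to \infty$ as $t$ approaches this time.

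The principal obstacle is the Hopf-type bound producing $c > 0$. The quadratic degeneracy of $V$ at $x = 0$ places the equation for $w$ outside the reach of classical Hopf lemmas, so the change of variables from Section~\ref{sec:CoV} is essential. The main technical work is to verify that the transformed $w$ has enough regularity up to the straightened boundary for the Hopf lemma to apply, and that its one-sided derivative in the new coordinates translates to strictly quadratic growth of $w$ in the original coordinates. The localized comparison in the final step is a secondary but nontrivial issue, since Proposition~\ref{prop:comparison} is stated globally and must be adapted to a bounded spatial domain with a barrier handling the artificial boundary.
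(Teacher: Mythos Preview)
Your proposal has a genuine gap at the step you yourself flag as the ``principal obstacle.'' The change of variables from Section~\ref{sec:CoV} on the half-line with $\gamma = 2$ is $\zeta(y) = \e^y$, which maps $\R$ onto $\R_+$ and sends the degenerate boundary $x = 0$ to $y = -\infty$. There is no straightened boundary: the transformed equation~\eqref{eq:transform-ev} is posed on all of $\R$, and the quantity $\liminf_{x \to 0^+} w(t_0, x)/x^2$ becomes $\liminf_{y \to -\infty} \tilde w(t_0, y)$, a statement about behavior at spatial infinity rather than a normal derivative at a finite point. No Hopf boundary lemma applies. Indeed, the transformed initial excess $\theta(y) = \e^{-2y}\vartheta(\e^y)$ is compactly supported, and for a uniformly parabolic equation on $\R$ the strong maximum principle gives only strict interior positivity, not a uniform positive lower bound at infinity; any such bound must come from the nonlinear reaction term and requires a quantitative argument. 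This is precisely the content of the theorem, and your outline does not supply it.

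A related issue is that you assert the result for \emph{any} nontrivial nonnegative bump $\vartheta$, which is strictly stronger than the statement being proved. The paper's proof does not establish this: it constructs a specific, sufficiently large $\vartheta$. For comparison, the paper's argument passes to the same logarithmic variable $y$, then tracks a weighted integral $\int \rho\, \log(1 + v/a)$ with $\rho(y) = \lambda \e^{\lambda y}$ and uses Jensen's inequality to derive a scalar differential inequality that blows up before time $\kappa^{-1}$, provided the initial bump is chosen to make a certain constant exceed $\log 2$. This is an $L^1$-type blow-up argument in the spirit of Itaya, not a pointwise Hopf argument, and it sidesteps the issue of pointwise behavior at $y = -\infty$ entirely.
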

\noindent
We emphasize that $\R_+$ denotes the open interval $(0, \infty)$, so $0 \not \in \supp \vartheta$.

At a high level, Theorem~\ref{thm:blow-up} states that mass moving from the bulk to the edge can cause a qualitative change in behavior.
This resembles the ``waiting time'' phenomenon studied in the porous medium equation (PME) and related models.
In the porous medium equation
\begin{equation}
  \label{eq:PME}
  \partial_t v = \partial_x^2(v^m) \quad \text{with } m > 1,
\end{equation}
the spatial support of $v$ generally grows in time, but may remain initially constant up to a positive ``waiting time.''
At the waiting time, mass from the bulk reaches the boundary and changes the order of vanishing, which induces the support to expand~\cite{ACK83}.

Superficially, our model \eqref{eq:main} appears to be a close cousin of the PME.
Indeed, one can alternatively write \eqref{eq:main} as
\begin{equation}
  \label{eq:div-form}
  \partial_t u = \frac{1}{4} \partial_x^2(u^2) - \frac{1}{2} (\partial_x u)^2.
\end{equation}
Thus \eqref{eq:main} is a form of the PME with nonlinear gradient absorption.
In fact, the classical waiting time phenomenon has also been observed~\cite{ZMZA15} in equations of the form
\begin{equation}
  \label{eq:PME-absorption}
  \partial_t w = \partial_x^2(w^m) - |\partial_x w|^q
\end{equation}
for $1 < q < m < 2$.

We emphasize that the exponents in \eqref{eq:div-form} lie just beyond this range.
And indeed, \eqref{eq:div-form} behaves quite differently to \eqref{eq:PME} and \eqref{eq:PME-absorption}.
Weak solutions of \eqref{eq:PME} are unique~\cite{OKC58} and the support of solutions to \eqref{eq:PME} and \eqref{eq:PME-absorption} can grow in time~\mbox{\cite{K67,ZMZA15}}.
In contrast, as noted above, weak solutions of \eqref{eq:div-form} are highly nonunique and their supports are nonincreasing in time~\cite{DPL87}.
We thus view Theorems~\ref{thm:root-Lip} and~\ref{thm:blow-up} as a distinct variation on the waiting-time theme.
For related results concerning the disappearance of isolated zeros in \eqref{eq:main}, see~\cite{BU90}.

\subsection*{Connection with a forward-backward SDE}
A solution to the PDE \eqref{eq:main} is a so-called decoupling function for a certain forward-backward stochastic differential equation (FBSDE; see, e.g.,~\cite{MWZZ15} for background).
Given $T > 0$ and a standard Brownian motion $B$, consider the problem
\begin{equation}
  \label{eq:FBSDE}
  \begin{aligned}
    \ds X(t) &= \sqrt{Y(t)}\ds B(t), \quad &X(0) &= x_0,\\
    \ds Y(t) &= Z(t)\ds B(t), &Y(T) &= u\big(0,X(T)\big)
  \end{aligned}
\end{equation}
posed on a time interval $[0, T]$.
This FBSDE can be alternatively written as
\begin{equation}
  \label{eq:ourusualformofFBSDE}
  \ds X(t) = \sqrt{\mathbb{E}\big[u\big(0,X(T)\big) \mid X(t)\big]}\ds B(t),\qquad X(0)=x_0.
\end{equation}
We call $u$ a decoupling function because
\begin{equation}
  \label{eq:udecoupling}
    u(T-t,x) = \mathbb{E}\big[u\big(0,X(T)\big)\mid X(t)=x\big],
\end{equation}
so the problem \eqref{eq:ourusualformofFBSDE} can be rewritten as \eqref{eq:udecoupling} together with the forward SDE
\begin{equation}
  \label{eq:Xwithdecoupling}
    \ds X(t) = \sqrt{u\big(T-t,X(t)\big)}\ds B(t), \quad X(0)=x_0.
\end{equation}
In the problem \eqref{eq:udecoupling}--\eqref{eq:Xwithdecoupling}, we must solve for $u$ as well as $X$.
However, it can be shown using Itô's formula that $u$ solves the PDE \eqref{eq:main}.
Therefore, one path to understand the FBSDE is to first study the PDE \eqref{eq:main} and then the SDE \eqref{eq:Xwithdecoupling}.
This connection between quasilinear PDEs and forward-backward SDEs is well-known; see, e.g.,~\cite[Section~8.2]{MY99}.
We prove this relationship in our setting in \cite[Proposition~1.14]{DG}; by Proposition~\ref{prop:C1} below, strong solutions satisfy the hypotheses in \cite{DG}.

Our companion article \cite{DG} proves local-in-time well-posedness for the FBSDE.
Moreover, this well-posedness holds so long as $u$ remains root-Lipschitz.
Thus Theorem~\ref{thm:root-Lip} implies that the FBSDE is well-posed for $T < \kappa^{-1}$.
PDE regularity theory has been used before to establish well-posedness for FBSDEs~\cite{MPY94,PT99,Del02}.
However, we are not aware of previous work that obtains precise estimates for degenerate problems with growth at infinity.

\subsection*{Organization}
In Section~\ref{sec:qualitative}, we establish various qualitative properties of strong solutions, including uniqueness (Theorem~\ref{thm:unique}).
We construct strong, root-Lipschitz solutions in Section~\ref{sec:existence} and thereby prove Theorem~\ref{thm:root-Lip}.
In Section~\ref{sec:blow-up}, we prove Theorem~\ref{thm:blow-up} by exhibiting blow-up at zero.

\subsection*{Acknowledgments}
We thank Hongjie Dong for his assistance with the intermediate Schauder estimate in Proposition~\ref{prop:transformed}, which supersedes a more complicated earlier argument.

The authors were supported by the NSF Mathematical Sciences Postdoctoral Research Fellowship program through grants DMS-2002118 (AD) and DMS-2103383 (CG).
Much of the work was completed while AD was at NYU Courant.

\section{Qualitative properties of strong solutions}
\label{sec:qualitative}

In this section, we study the qualitative properties of solutions of \eqref{eq:main}.
After a brief discussion of weak solutions, we move to the comparison principle and uniqueness for strong solutions.
These properties allow us to construct strong solutions with a certain degree of quantitative regularity.
We then show that strong solutions preserve the zero set of their initial data, which allows us to argue that strong solution do not generally exist for all time.

\subsection*{Notation}
Throughout the paper, we write $f \lesssim g$ when $f \leq C g$ for some constant $C \in \R_+$.
This is equivalent to $f = \m{O}(g)$.
If $f \lesssim g$ and $g \lesssim f$, we write $f \asymp g$.

\subsection{Positive smoothness}
We begin by showing that weak solutions are smooth where they are positive.
By omission, this highlights the importance of understanding the zero set where \eqref{eq:main} degenerates.
\begin{lemma}
  \label{lem:positive}
  Let $u$ be a weak solution of \eqref{eq:main} on a time interval $[0, T)$.
  Then on the set $\{(t, x) \in (0, T) \times \R \mid u(t, x) > 0\}$, $u$ is qualitatively smooth and satisfies the PDE \eqref{eq:main} pointwise.
\end{lemma}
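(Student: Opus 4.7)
The strategy is to localize inside the positivity set, observe that there the equation becomes uniformly parabolic with a Lipschitz coefficient, and then bootstrap via classical parabolic regularity. Fix $(t_0,x_0) \in (0,T) \times \R$ with $u(t_0,x_0) > 0$. Since $u \in \m{C}\big([0,T); W^{1,\infty}_{\loc}(\R)\big)$ embeds into $\m{C}_{\loc}\big((0,T) \times \R\big)$, the positivity set is open and there is a parabolic cylinder $Q \coloneqq (t_0 - \tau, t_0 + \tau) \times (x_0 - \rho, x_0 + \rho)$ on which $u \geq c > 0$ and $\|u\|_{W^{1,\infty}(Q)} \leq C$.

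Next, I would translate the weak form \eqref{eq:weak} into a linear equation on $Q$. Testing \eqref{eq:weak} against $\psi \in \m{C}_\cc^\infty(Q)$ and expanding $\partial_x(\psi u) = (\partial_x \psi) u + \psi \partial_x u$, the distributional identity
\begin{equation*}
  \partial_t u - \tfrac{1}{2} u\, \partial_x^2 u = 0 \quad \text{in } \m{D}'(Q)
\end{equation*}
follows after integrating the term $\tfrac{1}{2} u (\partial_x \psi)(\partial_x u)$ by parts in $x$ (this uses only that $\partial_x u \in L^\infty$). Set $a(t,x) \coloneqq \tfrac{1}{2} u(t,x)$, so $a \in [c/2, C]$ and $a$ is Lipschitz in $x$. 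Thus $u$ is a distributional solution on $Q$ of the uniformly parabolic linear equation $\partial_t u - a\, \partial_x^2 u = 0$.

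Having reduced to a linear problem with bounded measurable coefficient, I would then bootstrap. By Krylov--Safonov (or a De Giorgi--Nash--Moser argument for the divergence-form rewriting \eqref{eq:div-form}), $u$ is Hölder continuous in $Q$. Together with the assumed spatial Lipschitz regularity, $a \in \m{C}^{\alpha, \alpha/2}_{\loc}(Q)$ for some $\alpha \in (0,1)$. Interior Schauder estimates for non-divergence-form parabolic equations then upgrade $u$ to $\m{C}^{2+\alpha, 1+\alpha/2}_{\loc}(Q)$, in particular giving a classical (pointwise) solution. Iterating—each round improves the regularity of the coefficient $a$ by the regularity of $u$—yields $u \in \m{C}^\infty(Q)$. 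Shrinking $Q$ as needed, this holds near every point of $\{u > 0\}$.

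The main obstacle is the initial step: extracting any Hölder modulus from the weak formulation given only $u \in W^{1,\infty}_{\loc}$ and non-divergence form. Once a single $\m{C}^\alpha$ modulus is in hand, the Schauder bootstrap is essentially automatic, so I would spend the bulk of the effort justifying the first regularity gain, either via the divergence form \eqref{eq:div-form} (whose absorption term $\tfrac{1}{2}(\partial_x u)^2$ is bounded on $Q$ and can be treated as a bounded source) or by quoting Krylov--Safonov directly.
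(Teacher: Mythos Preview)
Your plan is correct and arrives at the same conclusion, but the paper takes a somewhat different route that sidesteps the very step you flag as the ``main obstacle.'' Rather than first extracting a parabolic H\"older modulus via Krylov--Safonov or De~Giorgi--Nash--Moser and then invoking Schauder theory, the paper works entirely with $L^2$-based energy estimates on the divergence form \eqref{eq:div-form}: since $u$ is bounded above and below on $Q$ and $\partial_x u \in L^\infty(Q)$, Theorem~6.6 of \cite{L96} gives $\partial_x^2 u \in L^2(Q)$ directly, without any prior H\"older regularity on the coefficients. One then differentiates the equation (so that $r = \partial_x u$ solves the divergence-form equation $\partial_t r = \tfrac{1}{2}\partial_x(u\,\partial_x r)$), applies the same $L^2$ estimate to obtain $\partial_x^3 u \in L^2(Q)$, and uses Morrey's inequality to conclude $\partial_x^2 u \in \m{C}_x^{1/2}$; the PDE then makes $\partial_t u$ continuous, after which the bootstrap proceeds. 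The advantage of the paper's approach is that it never needs time-H\"older continuity of the coefficient, only boundedness, so the ``first regularity gain'' is a soft energy estimate rather than a deep DGNM/KS result. Your Schauder-based route is equally valid and perhaps more familiar, but it leans on heavier machinery at the outset; the paper's $L^2$ argument is more self-contained in that respect. One small caution in your write-up: the identity ``$\partial_t u - \tfrac{1}{2}u\,\partial_x^2 u = 0$ in $\m{D}'(Q)$'' is not well-defined before you know $\partial_x^2 u$ is a function, so you should phrase the initial step purely in the divergence formulation, as you in fact suggest at the end.
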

\begin{proof}
  Let $P \coloneqq \{(t, x) \in (0, T) \times \R \mid u(t, x) > 0\}$.
  Because $u$ is continuous, $P$ is open.
  Let $Q$ be a bounded open set such that $\bar{Q} \subset P$.
  Then $0 < \inf_Q u \leq \sup_Q u < \infty$ and $\norm{\partial_x u}_{L^\infty(Q)} < \infty$.
  Thus by Theorem~6.6 of~\cite{L96}, $\partial_x^2 u \in L^2(Q)$.
  (The theorem is stated for divergence-form equations; we apply it to the formulation \eqref{eq:div-form} of \eqref{eq:main}.)
  Differentiating \eqref{eq:main}, we see that $r \coloneqq \partial_x u$ weakly satisfies the divergence-form PDE
  \begin{equation*}
    \partial_t r = \frac{1}{2} \partial_x(u \partial_x r).
  \end{equation*}
  Applying \cite[Theorem~6.6]{L96} again, we see that $\partial_x^3 u = \partial_x^2 r \in L^2(Q)$.
  In particular, by Morrey's inequality, $\partial_x^2 u \in \m{C}_x^{1/2}$.
  Using the PDE, $\partial_t u \in \m{C}_x^{1/2}$, and in particular $\partial_t u$ is continuous.
  We have thus shown that $u \in \m{C}_t^1 \cap \m{C}_x^2$.
  This allows us to integrate by parts in the weak formulation \eqref{eq:weak}.
  Using an approximation of the identity for $\psi$, we see that \eqref{eq:main} is satisfied pointwise in $Q$.
  Finally, we can repeatedly differentiate \eqref{eq:main} and apply \cite[Theorem~6.6]{L96} to conclude that $u$ is in fact smooth in $Q$.
\end{proof}

\subsection{Comparison and uniqueness}
We now focus on the special properties of \emph{strong} solutions, beginning with a comparison principle.
We emphasize that merely weak solutions do not satisfy such a principle, as the nonuniqueness exhibited in~\cite{DPL87} demonstrates.
\begin{proposition}
  \label{prop:comparison}
  Given $T > 0$, suppose $u^\pm \in \m{C}_\loc\big([0, T] \times \R\big)$ satisfy $\partial_x^2 u^\pm \in L_t^1 L_x^\infty$ and $\partial_t u^\pm \in \braket{x}^2 L_t^1L_x^\infty$ as well as
  \begin{equation}
    \label{eq:super-sub}
    \partial_t u^+ \geq \frac{1}{2} u^+ \partial_x^2 u^+ \And \partial_t u^- \leq \frac{1}{2} u^- \partial_x^2 u^-\qquad \text{a.e.}
  \end{equation}
  Then if $u^-(0, \anon) \leq u^+(0, \anon)$, we have $u^- \leq u^+$.
\end{proposition}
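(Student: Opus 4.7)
The plan is to reduce both inequalities in~\eqref{eq:super-sub} to a single linear differential inequality for $w := u^- - u^+$, and defeat it by comparing $w$ against a strict supersolution barrier that grows slightly faster than quadratically at spatial infinity, then shrinking the barrier to zero.

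Subtracting the inequalities in~\eqref{eq:super-sub} and invoking the algebraic identity
$$
  u^- \partial_x^2 u^- - u^+ \partial_x^2 u^+ = u^- \partial_x^2 w + w\, \partial_x^2 u^+,
$$
I get $\partial_t w \leq \tfrac12(u^- \partial_x^2 w + w\, \partial_x^2 u^+)$ almost everywhere. The integrability $\partial_t w \in \braket{x}^2 L_t^1 L_x^\infty$ lets me integrate in $t$ at each fixed $x$ to produce a pointwise integrated version, and the hypothesis $w(0,\cdot) \leq 0$ kills the free term. Combined with continuity, the hypotheses also imply a growth bound $u^\pm(t,x) \leq M \braket{x}^2$ on $[0,T]\times\R$. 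Fixing $\alpha \in (0,1)$, I would use the barrier
$$
  \psi(t,x) := \braket{x}^{2+\alpha}\exp\!\left(\int_0^t A(s)\,\textup{d}s\right),
  \qquad
  A(s) := 1 + MC_\alpha + \bigl\|\partial_x^2 u^+(s,\cdot)\bigr\|_{L^\infty},
$$
with $C_\alpha$ chosen so that $|\partial_x^2 \braket{x}^{2+\alpha}| \leq C_\alpha \braket{x}^\alpha$. Since $A \in L^1([0,T])$, $\psi$ is finite, and a direct computation gives the \emph{strict} supersolution inequality $\partial_t \psi > \tfrac12(u^- \partial_x^2 \psi + \psi\, |\partial_x^2 u^+|)$.

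Fix $\varepsilon > 0$. Because $\psi$ beats $w$ by a factor $\braket{x}^\alpha$ at infinity, the spatial supremum $M_\varepsilon(t) := \sup_x(w - \varepsilon\psi)(t,x)$ is attained, depends continuously on $t$, and is strictly negative at $t=0$. If $M_\varepsilon$ ever reaches zero, let $t_0$ be the first such time and $x_0$ an attaining point. Spatial maximality then gives $\partial_x^2 w(t_0,x_0) \leq \varepsilon\, \partial_x^2 \psi(t_0,x_0)$; combining with $w(t_0,x_0) = \varepsilon \psi(t_0,x_0)$ and $u^- \geq 0$, substituting into the integrated inequality over $[t_0 - h, t_0]$, dividing by $h$ and sending $h \to 0$ yields a pointwise inequality contradicting the strict supersolution property of $\psi$. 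Therefore $w \leq \varepsilon \psi$; sending $\varepsilon \to 0$ gives $u^- \leq u^+$.

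The main obstacle is the weak regularity: $\partial_t w$ and $\partial_x^2 w$ live only in $L_t^1 L_x^\infty$, so pointwise values at $(t_0, x_0)$ are not a priori meaningful. I expect to handle this either by mollifying $u^\pm$ in time and absorbing the commutator $(u^\pm \partial_x^2 u^\pm)_\eta - u^\pm_\eta \partial_x^2 u^\pm_\eta$ in the limit $\eta \to 0$, or by selecting $t_0$ to be a simultaneous Lebesgue point of $s \mapsto \|\partial_x^2 w(s,\cdot)\|_{L^\infty}$ and $s \mapsto \|\partial_x^2 u^+(s,\cdot)\|_{L^\infty}$ and running the touching argument at that particular $t_0$.
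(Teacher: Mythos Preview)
Your approach is genuinely different from the paper's. The paper uses a weighted-$L^1$ energy method: with $v = \tfrac{1}{2}(u^- - u^+)$ it defines $I_\eps(t) = \int \langle x\rangle^{-4} F_\eps(v)\,\mathrm{d}x$ for a smooth convex approximation $F_\eps$ of $s \mapsto s_+$, and shows via integration by parts that $\dot I_\eps \lesssim I_\eps \cdot g(t)$ for some $g \in L^1$; Gr\"onwall then forces $I_\eps(t) \to 0$. The key point is that this argument never needs pointwise values of $\partial_x^2 w$ or $\partial_t w$: everything happens under the integral sign, and the dangerous second-order term is handled by the convexity of $F_\eps$ after one integration by parts.

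Your barrier/first-touching argument is morally correct in the smooth case, but the gap you flag at the end is real and your two proposed fixes do not close it. The Lebesgue-point idea cannot work as stated: the touching time $t_0$ is determined by the problem, not chosen by you, and there is no reason it should be a Lebesgue point of $s \mapsto \partial_x^2 w(s,x_0)$. Mollifying only in time does not help with the spatial issue: even after time-mollification, $w_\eta(t_0,\cdot)$ is still only $C^{1,1}$ in $x$, so $\partial_x^2 w_\eta(t_0,x_0)$ need not exist at the touching point and the second-derivative test $\partial_x^2(w_\eta - \eps\psi)(t_0,x_0) \leq 0$ is unavailable pointwise. To make your route rigorous you would have to mollify in both variables and push the resulting $L^1_t$ commutator error through a pointwise-in-$t$ touching argument (which is exactly where the difficulty resurfaces), or else recast the whole thing as an integrated Gr\"onwall inequality on the running maximum---at which point you have essentially reproduced the paper's method. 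The paper's integral approach is cleaner precisely because it avoids this collision between $L^1_t$ regularity and pointwise-in-$(t,x)$ conclusions.
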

\begin{proof}
  Because the nonnegative continuous function $u^\pm$ is locally bounded and $\partial_x^2 u^\pm \in L_t^1L_x^\infty$, we can integrate twice in space to find
  \begin{equation}
    \label{eq:super-sub-weighted}
    u^\pm \in \braket{x}^2 L_t^1L_x^\infty \And \partial_x u^\pm \in \braket{x} L_t^1 L_x^\infty.
  \end{equation}
  Define $\zeta(x) \coloneqq \braket{x}^{-4}$, noting that
  \begin{equation}
    \label{eq:weight-derivs}
    \abs{\zeta'} \lesssim \braket{x}^{-1} \zeta \And \abs{\zeta''} \lesssim \braket{x}^{-2} \zeta.
  \end{equation}
  Also, let $(F_\eps)_{\eps > 0}$ be a smooth, convex $L^\infty$-approximation of $F_0(s) \coloneqq s_+ = \max\{s, 0\}$.
  We construct this family so that $F_\eps = F_0$ on $(-\eps, \eps)^\cc$ and
  \begin{equation}
    \label{eq:near-zero}
    |s F_\eps'(s)| \lesssim F_\eps(s)\qquad\text{for all $s$}.
  \end{equation}
  
  Now let $v \coloneqq \tfrac{1}{2}(u^- - u^+)$ and $w \coloneqq \tfrac{1}{2}(u^+ + u^-)$.
  We define
  \begin{equation*}
    I_\eps(t) \coloneqq \int_{\R} \zeta(x) (F_\eps \circ v)\left(t, x\right) \d x,
  \end{equation*}
  which is finite for almost all $t$ because $v \in \braket{x}^2 L_t^1 L_x^\infty$.
  Because $\partial_t u^\pm \in \braket{x}^2 L_t^1L_x^\infty$, $I_\eps$ is differentiable almost everywhere and
  \begin{equation*}
    \dot{I}_\eps = \int_{\R} \zeta (F_\eps' \circ v) \partial_t v.
  \end{equation*}
  Deploying \eqref{eq:super-sub} and writing $u^\pm$ in terms of $w$ and $v$, we find
  \begin{equation}
    \label{eq:time-deriv}
    \dot{I}_\eps \leq \int_{\R} \zeta (F_\eps' \circ v) \left[w\partial_x^2 v + v\partial_x^2 w \right].
  \end{equation}
  The second term in \eqref{eq:time-deriv} is easy to manage: using \eqref{eq:near-zero}, we find
  \begin{equation}
    \label{eq:first}
    \int_{\R} \zeta (F_\eps' \circ v) v\partial_x^2 w  \lesssim I_\eps\sup_{x \in \R} |\partial_x^2w|.
  \end{equation}
  For the first term in \eqref{eq:time-deriv}, we integrate by parts, writing
  \begin{align*}
    \int_{\R} \zeta (F_\eps' \circ v) w\partial_x^2 v &= - \int_{\R} \partial_x[\zeta w (F_\eps'\circ v)] \partial_x v\\
                                                      &= - \int_{\R} \big[\partial_x(\zeta w) \partial_x(F_\eps \circ v) + \zeta w(F_\eps''\circ v) (\partial_x v)^2\big]\\
                                                      &\leq \int_{\R} (F_\eps \circ v) \partial_x^2(\zeta w).
  \end{align*}
  In the last step we used the convexity of $F_\eps$ and the positivity of $\zeta w$.
  Using this and \eqref{eq:weight-derivs}, we find
  \begin{equation*}
    \int_{\R} \zeta (F_\eps' \circ v) w\partial_x^2 v \lesssim I_\eps \sup_{x \in \R} \left[\braket{x}^{-2} w + \braket{x}^{-1} \abs{\partial_xw} + |\partial_x^2 w|\right].
  \end{equation*}
  Using this and \eqref{eq:first} in \eqref{eq:time-deriv}, we obtain
  \begin{equation*}
    \dot{I}_\eps \lesssim I_\eps \sup_{x \in \R} \left[\braket{x}^{-2} w + \braket{x}^{-1} \abs{\partial_xw} + |\partial_x^2 w|\right].
  \end{equation*}
  By \eqref{eq:super-sub-weighted}, the supremum is integrable in time.
  For all $t \in [0, T],$ Gr\"onwall's inequality yields
  \begin{equation*}
    I_\eps(t) \leq \exp\left(C \int_0^t \sup_{x \in \R} \left[\braket{x}^{-2} w + \braket{x}^{-1} \abs{\partial_xw} + |\partial_x^2 w|\right]\right) I_\eps(0)
  \end{equation*}
  for some $C \in \R_+$.
  We now take $\eps \searrow 0$ and observe that $I_\eps(0) \to 0$ because $u^-(0,\anon) \leq u^+(0,\anon)$ almost everywhere.
  It follows that
  \begin{equation*}
    \limsup_{\eps \to 0^+} I_\eps(t) = 0 \ForAll t \in [0, T].
  \end{equation*}
  Since $F_\eps(s) \to s_+$ pointwise as $\eps \searrow 0$, we have $v_+ \equiv 0$.
  That is, $u^- \leq u^+$.
\end{proof}
Of course, a comparison principle yields uniqueness.
\begin{proof}[Proof of Theorem~\textup{\ref{thm:unique}}]
  Let $u^1$ and $u^2$ be two strong solutions of \eqref{eq:main} on a time interval $[0, T)$ with the same initial data.
  Then $u^- = u^1$ and $u^+ = u^2$ satisfy the hypotheses of Proposition~\ref{prop:comparison}, so $u^1 \leq u^2$.
  By symmetry, $u^1 = u^2$.
\end{proof}

\subsection{Existence and regularity}
We next use the comparison principle to show local-in-time strong existence.
As our equation is both degenerate and quasilinear and our solutions are unbounded, we take some care with the argument.
\begin{proposition}
  \label{prop:local}
  If $u_0 \in \m{C}_\loc^1(\R)$ satisfies $\partial_x^2 u_0 \in L^\infty(\R)$, then \eqref{eq:main} admits a strong solution $u$ on the time interval $\big[0, 2(\esssup \partial_x^2 u_0)^{-1}\big)$ and $\partial_x^2 u \in L^\infty([0, T] \times \R)$  for all $T < 2(\esssup \partial_x^2 u_0)^{-1}$.
\end{proposition}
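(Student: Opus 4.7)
The strategy is vanishing viscosity combined with a sharp upper bound on $\partial_x^2 u$ inherited from the scalar ODE $\dot s = \tfrac{1}{2} s^2$. Formally differentiating \eqref{eq:main} twice in $x$ produces the semilinear equation
\begin{equation*}
  \partial_t s = \tfrac{1}{2} u \partial_x^2 s + (\partial_x u) \partial_x s + \tfrac{1}{2} s^2 \qquad \text{for } s \coloneqq \partial_x^2 u,
\end{equation*}
whose zeroth-order reaction term $\tfrac{1}{2} s^2$ is exactly the ODE above. Setting $S_0 \coloneqq \esssup \partial_x^2 u_0$, the maximal ODE solution $S(t) = S_0/(1 - S_0 t/2)$ blows up at time $2/S_0$, which dictates the lifetime we seek.

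First, I would regularize: mollify $u_0$ and add a small $\eps > 0$ to obtain smooth, strictly positive initial data $u_0^\eps$ with $\esssup \partial_x^2 u_0^\eps \to S_0$. On a large interval $[-R, R]$ with appropriate Dirichlet data, the PDE is uniformly parabolic (since $u_0^\eps \geq \eps$), and classical quasilinear parabolic theory yields a smooth approximate solution $u^{\eps, R}$. The crucial quantitative estimate is the $\eps$- and $R$-uniform upper bound
\begin{equation*}
  \sup_{x \in [-R, R]} \partial_x^2 u^{\eps, R}(t, x) \leq \frac{S_0^\eps}{1 - S_0^\eps t/2}, \qquad t \in [0, 2/S_0^\eps),
\end{equation*}
which I would derive from the parabolic maximum principle applied to the semilinear equation above: at a spatial interior maximum, the second- and first-order spatial terms are nonpositive, leaving $\dot{\bar{s}} \leq \tfrac{1}{2} \bar{s}^2$, and an ODE comparison closes the argument.

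To handle boundary contributions cleanly and produce an $R$-independent quadratic growth bound on $u^{\eps, R}$ itself, I would also compare $u^{\eps, R}$ against the explicit family of quadratic solutions
\begin{equation*}
  \phi_{x_0}(t, x) \coloneqq \frac{\tfrac{S_0^\eps}{2}(x - x_0)^2 + (u_0^\eps)'(x_0)(x - x_0) + u_0^\eps(x_0)}{1 - S_0^\eps t / 2},
\end{equation*}
each of which solves \eqref{eq:main} exactly and dominates $u_0^\eps$ at $t = 0$ by the tangent-line inequality for the convex function $\tfrac{S_0^\eps}{2} x^2 - u_0^\eps$. Proposition~\ref{prop:comparison} (or its bounded-interval analogue) then gives $u^{\eps, R}(t, x) \leq \inf_{x_0} \phi_{x_0}(t, x)$, yielding an $R$-independent quadratic bound on $u^{\eps, R}$.

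Finally, I would pass to the limits $R \to \infty$ and $\eps \to 0$. Uniform local bounds on $u^{\eps, R}$ and $\partial_x^2 u^{\eps, R}$, combined with interior parabolic regularity as in the proof of Lemma~\ref{lem:positive}, produce local compactness, and Arzelà--Ascoli extracts a locally uniform limit $u$. Stability of the weak formulation \eqref{eq:weak} under this convergence yields a weak solution, and the estimates propagate to give $u \in \braket{x}^2 L^\infty([0, T] \times \R)$ and $\partial_x^2 u \in L^\infty([0, T] \times \R)$ for every $T < 2/S_0$. The main obstacle is reconciling the degeneracy of \eqref{eq:main} at $\{u = 0\}$, the quadratic growth of $u$ at infinity, and the need for the sharp lifetime $2/S_0$; the explicit quadratic supersolutions $\phi_{x_0}$ are what makes this work, being simultaneously exact solutions on all of $\R$, tangent quadratic upper bounds on $u_0^\eps$, and bearers of the correct ODE blow-up.
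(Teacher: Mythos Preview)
Your proposal follows the same strategy as the paper: vanishing viscosity, a maximum-principle bound on $s=\partial_x^2 u$ via the semilinear equation $\partial_t s=\tfrac12 u\,\partial_x^2 s+(\partial_x u)\,\partial_x s+\tfrac12 s^2$, and explicit quadratic supersolutions of \eqref{eq:main} to control growth. The paper's barrier $Q(t,x)=(1-at)^{-1}(ax^2+bx+c)$ is your $\phi_{x_0}$ with $x_0=0$; your tangent family is a pleasant refinement but not needed.

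The one substantive technical difference is spatial truncation, and there your sketch leaves a gap. You work on $[-R,R]$ with unspecified Dirichlet data and argue only at an \emph{interior} spatial maximum of $\partial_x^2 u^{\eps,R}$; you do not say what prevents the maximum from sitting on $x=\pm R$, where the value of $\partial_x^2 u$ is not controlled by your $u$-barriers $\phi_{x_0}$. The paper circumvents this entirely: rather than restricting to a bounded interval, it modifies $u_0$ outside $[-M,M]$ to a bounded $\m{C}^1$ function $u_0^M$ whose second derivative still lies in $[-2A,2a]$ (by gluing on short quadratic arcs that flatten the slope), adds $\eps$, mollifies, and solves on all of $\R$ via classical bounded quasilinear theory. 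The comparison principle for the $S$-equation then applies on $\R$ with no lateral boundary. Two smaller points you would need to add: the proposition claims a two-sided $L^\infty$ bound on $\partial_x^2 u$, so you also need a lower bound (the paper uses the constant subsolution $-2A$ of the $S$-equation); and the paper takes the final $\eps\searrow 0$ limit monotonically via Proposition~\ref{prop:comparison} and invokes \cite{DPL87} for the weak-solution property of the limit, rather than relying on compactness through the degenerate region.
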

\begin{proof}
  Let $A \coloneqq \tfrac{1}{2} \esssup |\partial_x^2 u_0|, a \coloneqq \tfrac{1}{2} \esssup \partial_x^2 u_0$, $b \coloneqq \partial_x u_0(0)$, and $c \coloneqq u_0(0)$.
  Integrating twice in space, we have
  \begin{equation*}
    u_0 \leq a x^2 + bx + c.
  \end{equation*}
  Note that
  \begin{equation*}
    Q(t, x) \coloneqq (1 - at)^{-1} (ax^2 + bx + c)
  \end{equation*}
  is a strong solution of \eqref{eq:main} on the time interval $[0, a^{-1})$.

  Given $M > 0$, we define a bounded modification $u_0^M$ of $u_0$ satisfying the following properties:
  \begin{equation}
    \label{eq:mod}
    u_0^M|_{[-M,M]} = u_0|_{[-M, M]}, \quad u_0^M \geq 0, \And -2A \leq \partial_x^2 u_0^M \leq 2a.
  \end{equation}
  We demonstrate how to do so on $[M, \infty)$.
  The method extends to $(-\infty, M]$ by symmetry.

  If $\partial_x u_0(M) = 0$, we extend $u_0$ by the constant value $u_0(M)$ to $[M, \infty)$.
  Next suppose $\partial_x u_0(M) < 0$.
  We extend $u_0$ in a $\m{C}^1$ fashion by a quadratic $q_+$ with $q_+'' = 2a$ until the point $z_+ > M$ at which $q_+'(z_+) = 0$.
  Thereafter, we extend $u_0^M$ by the constant value $q_+(z_+)$ to the remainder of the ray $[M, \infty)$.
  So we have
  \begin{equation}
    \label{eq:mod-details}
    u_0^M =
    \begin{cases}
      u_0 & \text{on } [-M, M],\\
      q_+ & \text{on } [M, z_+],\\
      q_+(z_+) & \text{on} [z_+, \infty).
    \end{cases}
  \end{equation}
  The one-sided bound on $\partial_x^2 u_0$ ensures that $u_0 \leq q_+.$
  Because $u_0 \geq 0$, we have $q_+ \geq 0$, which confirms \eqref{eq:mod} in this case.
  
  Finally, suppose $\partial_x u_0(M) > 0$.
  Then we extend $u_0$ in a $\m{C}^1$ fashion by a quadratic $q_-$ with $q_-'' = -2A$ until the point $z_- > M$ at which $q_-'(z_-) = 0$.
  Thereafter, we extend $u_0^M$ by the constant value $q_-(z_-)$ to the remainder of the ray $[M, \infty)$.
  So $u_0^M$ satisfies \eqref{eq:mod-details} with $(q_-, z_-)$ in place of $(q_+, z_+)$.
  Now, the nonnegativity of $u_0^M$ is not in question on $[M, \infty)$, so \eqref{eq:mod} holds on $[M, \infty)$.
  Applying a symmetric procedure on $(-\infty, -M]$, we obtain the desired modification $u_0^M$ of $u_0$.

  Given $\eps > 0$, let $u_0^{\eps, M} \coloneqq u_0^M + \eps$.
  Finally, given $\delta > 0$, let $u_0^{\eps,M,\delta}$ denote a mollification of $u_0^{\eps, M}$ to scale $\delta$ by a suitable smooth approximation of the identity.
  Then $u_0^{\eps,M,\delta}$ is bounded, uniformly positive, and smooth.
  By Theorem~V.8.1 of~\cite{LSU68}, \eqref{eq:main} admits a smooth, bounded, global-in-time classical solution $u^{\eps, M, \delta}$ with initial condition $u_0^{\eps, M, \delta}$.
  To apply the theorem, we use the divergence-form formulation \eqref{eq:div-form}.
  We also modify the diffusivity outside $[\inf u_0^{\eps,M,\delta}, \sup u_0^{\eps,M,\delta}]$ to be uniformly bounded and positive; \emph{a posteriori}, the comparison principle implies that our solution remains between its initial infimum and supremum, and thus also solves~\eqref{eq:div-form}.
  
  Differentiating \eqref{eq:main} twice in space and letting $S \coloneqq \partial_x^2 u$, we arrive at the equation
  \begin{equation}
    \label{eq:second}
    \partial_t S = \frac{1}{2} u \partial_x^2 S + (\partial_x u) \partial_x S + \frac{1}{2}S^2
  \end{equation}
  for the second derivative.
  Now, $-2A$ and $\bar{S}(t) \coloneqq 2a(1 - at)^{-1}$ are sub- and supersolutions of \eqref{eq:second}, respectively.
  Moreover, by \eqref{eq:mod}, $-2A \leq \partial_x^2 u_0^{\eps,M,\delta} \leq \bar{S}(0)$.
  When $u = u^{\eps,M,\delta}$, the coefficients of \eqref{eq:second} are uniformly smooth, as is $S = \partial_x^2 u^{\eps,M,\delta}$.
  It thus follows from the classical comparison principle (for example, Corollary~2.5 of~\cite{L96}) that
  \begin{equation}
    \label{eq:second-bd}
    -2A \leq \partial_x^2 u^{\eps,M,\delta} \leq \bar{S}.
  \end{equation}
  We now take the mollification scale $\delta \searrow 0$.
  Intermediate Schauder estimates such as Theorem~4.29 of \cite{L96} ensure that we can extract a subsequential limit $u^{\eps, M}$ solving \eqref{eq:main} classically with data $u_0^{\eps, M}$
  Moreover, the inequalities \eqref{eq:second-bd} pass through the limit:
  \begin{equation}
    \label{eq:second-bd-limit}
    -2A \leq \partial_x^2 u^{\eps,M} \leq \bar{S}.
  \end{equation}
  Hence $u^{\eps, M}$ is a strong solution of \eqref{eq:main} on $[0, a^{-1})$; by Theorem~\ref{thm:unique}, it is unique.
  (In fact, one can combine Krylov--Safonov and Schauder estimates to show that $u^{\eps,M}$ is the unique uniformly positive bounded weak solution of \eqref{eq:main} with data $u_0^{\eps, M}$.)
  
  Now, Proposition~\ref{prop:comparison} implies that
  \begin{equation}
    \label{eq:barriers}
    \eps \leq u^{\eps, M} \leq Q.
  \end{equation}
  That is, $u^{\eps,M}$ is locally bounded independent of $\eps$ and $M$.
  Taking $M \to \infty$, the local intermediate Schauder estimate \cite[Theorem~4.29]{L96} again provides a subsequential limit $u^\eps$ solving \eqref{eq:main} classically with data $u_0 + \eps$.
  Because the bound \eqref{eq:second-bd-limit} is independent of $M$, it extends to the limit, so $u^\eps$ is again a strong solution on the time interval $[0, a^{-1})$.
  Thus $u^\eps$ is unique.
  By \eqref{eq:second-bd-limit} and \eqref{eq:barriers}, it satisfies
  \begin{equation}
    \label{eq:unbounded-bds}
    0 \leq u^\eps \leq Q \And -2A \leq \partial_x^2 u^\eps \leq \bar{S}.
  \end{equation}
  Now, the initial conditions $(u_0 + \eps)_{\eps > 0}$ are increasing in $\eps$ and the solutions $(u^\eps)_{\eps > 0}$ are strong.
  Hence by Proposition~\ref{prop:comparison}, the family $(u^\eps)_\eps$ is also increasing in $\eps$.
  We can thus consider the pointwise limit $u \coloneqq \lim_{\eps \searrow 0} u^\eps$.
  Once more, the bounds \eqref{eq:unbounded-bds} pass through the limit and yield $0 \leq u \leq Q$ and $-2A \leq \partial_x^2 u \leq \bar{S}$ a.e.
  In~\cite{DPL87}, the authors show that $u$ is a weak solution of \eqref{eq:main}.
  It follows that $u$ is in fact a strong solution on the interval $[0, a^{-1})$ and $\partial_x^2 u \in L^\infty([0, T] \times \R)$ for all $T \in (0, a^{-1})$.
\end{proof}
\begin{corollary}
  \label{cor:regular}
  If $u$ is a strong solution of \eqref{eq:main} on a time interval $[0, T)$, then $u$ is differentiable in $x$ on $(0, T) \times \R$.
  Moreover,
  \begin{equation*}
    \sup_{t \in [T_1,T_2]} \op{Lip}[\partial_xu(t, \anon)] < \infty \quad \text{ or equivalently } \quad \partial_x^2 u \in L^\infty([T_1, T_2] \times \R)
  \end{equation*}
  for all $0 < T_1 \leq T_2 < T$.
  If $\partial_x^2 u_0 \in L^\infty(\R)$, then the same holds with $T_1 = 0$.
\end{corollary}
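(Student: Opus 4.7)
The plan is to restart $u$ from times $t_0$ at which $\partial_x^2 u(t_0, \anon)$ is bounded in $L^\infty(\R)$, apply Proposition~\ref{prop:local} shifted to time $t_0$, identify the restart with $u$ via Theorem~\ref{thm:unique}, and then patch the resulting local bounds together by compactness. The $L^1$-in-time integrability of $\|\partial_x^2 u(t,\anon)\|_\infty$ built into the strong-solution definition is precisely the input that makes this plan succeed.

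Setting $\tilde M(t) \coloneqq \|\partial_x^2 u(t,\anon)\|_\infty$, the strong-solution hypothesis gives $\tilde M \in L^1([0,T'])$ for every $T' < T$, so $\tilde M(t_0) < \infty$ for almost every $t_0 \in [0,T)$. For any such $t_0$, the snapshot $u(t_0, \anon)$ lies in $\m{C}_\loc^1(\R)$ (its distributional first derivative is Lipschitz, since $\partial_x^2 u(t_0, \anon) \in L^\infty$) with $\partial_x^2 u(t_0, \anon) \in L^\infty(\R)$, so Proposition~\ref{prop:local} applied at initial time $t_0$ produces a strong solution on $[t_0,\,t_0 + 2/\tilde M(t_0))$ whose second spatial derivative is bounded on every closed subinterval. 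Uniqueness of strong solutions (Theorem~\ref{thm:unique}) identifies this restart with $u$, so $\partial_x^2 u$ is bounded on $[t_0, T''] \times \R$ for every $T'' < t_0 + 2/\tilde M(t_0)$.

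The heart of the proof is the following covering claim: for every $t^* \in (0, T)$ there exists a valid restart $t_0 \in [0, t^*)$ with $\tilde M(t_0) < \infty$ and $t_0 + 2/\tilde M(t_0) > t^*$. If no such $t_0$ exists, then $\tilde M(t_0) \geq 2/(t^* - t_0)$ on a full-measure subset of $(0, t^*)$, and integrating yields $\int_0^{t^*} \tilde M(s) \ds s \geq \int_0^{t^*} 2(t^* - s)^{-1} \ds s = +\infty$, contradicting $\tilde M \in L^1([0, t^*])$. I expect this to be the only nontrivial step; it expresses how the $L^1_t L^\infty_x$ integrability in the strong-solution definition exactly cancels the $(t^* - t_0)^{-1}$ singularity coming from the Riccati-type bound supplied by Proposition~\ref{prop:local}.

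Finally I would apply compactness. For each $t^* \in [T_1, T_2]$, pick $t_0(t^*)$ from the covering claim and a small $\epsilon(t^*) > 0$ such that $[t_0(t^*),\, t^* + \epsilon(t^*)] \subset [t_0(t^*),\, t_0(t^*) + 2/\tilde M(t_0(t^*)))$; on this closed interval $\partial_x^2 u$ is bounded by the first step. The open sets $(t_0(t^*) - 1,\, t^* + \epsilon(t^*))$ cover the compact set $[T_1, T_2]$, and a finite subcover delivers the uniform $L^\infty$ bound on $[T_1, T_2] \times \R$. When $T_1 = 0$ and $\partial_x^2 u_0 \in L^\infty(\R)$, the choice $t_0 = 0$ is itself a valid restart, so the same argument handles a neighborhood of $t^* = 0$. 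Differentiability of $u$ in $x$ on $(0, T) \times \R$ follows at once: for each $t \in (0, T)$ we have $\partial_x^2 u(t, \anon) \in L^\infty(\R)$, so the antiderivative $\partial_x u(t, \anon)$ is Lipschitz and hence continuous, forcing $u(t, \anon)$ to be classically differentiable at every $x$.
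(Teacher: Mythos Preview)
Your proposal is correct and follows essentially the same approach as the paper: restart via Proposition~\ref{prop:local}, identify via uniqueness, and pit the non-integrability of $(t^*-s)^{-1}$ against the $L^1_tL^\infty_x$ hypothesis to rule out interior blow-up of $\partial_x^2 u$. The paper packages this last step as a direct contradiction at a putative blow-up time rather than a covering/compactness argument, and works with the one-sided $\tfrac12\esssup \partial_x^2 u$ instead of the full norm $\tilde M$, but the substance is identical; one cosmetic point---your open cover would be cleaner as $(t_0(t^*),\,t^*+\epsilon(t^*))$, since the bound you carry lives only on $[t_0(t^*),\,t^*+\epsilon(t^*)]$ and the extra ``$-1$'' on the left is neither needed nor covered by that bound.
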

\noindent
So although $\partial_x^2u$ is \emph{a priori} merely $L^1$ in time, it is in fact uniformly bounded on compact subsets of $(0,T)$---blow-up can only happen near the times $0$ or $T$.
\begin{proof}
  Let $A(t) \coloneqq \tfrac{1}{2} \esssup \partial_x^2u (t, \anon)$, which is defined and finite for a.e. $t \in (0, T)$.
  Take $s \in (0, T)$ such that $A(s) < \infty$.
  In the proof of Proposition~\ref{prop:local}, we showed that $A$ is defined and finite on the time interval $\m{I}(s) \coloneqq [s, \min\{s + A(s)^{-1}, T\})$.
  More precisely,
  \begin{equation}
    \label{eq:second-upper}
    A(t) \leq \frac{1}{A(s)^{-1} - (t - s)} \ForAll t \in \m{I}(s).
  \end{equation}
  For the sake of contradiction, suppose there exists $T_* \in (0, T)$ such that
  \begin{equation*}
    \adjustlimits \lim_{\eps \searrow 0} \esssup_{[T_* - \eps, T_* + \eps]} A = \infty.
  \end{equation*}
  Then \eqref{eq:second-upper} implies that $A(s) \geq (T_* - s)^{-1}$ for a.e. $s \in [0, T_*)$.
  This is not integrable in time, which contradicts the definition of strong solutions.
  We conclude that
  \begin{equation*}
    \esssup_{[T_1, T_2] \times \R} |\partial_x^2 u| < \infty
  \end{equation*}
  for all $0 < T_1 \leq T_2 < T$.
  Then \eqref{eq:second-upper} shows that $A$ is actually well-defined on the entire interval $(0, T)$.
  Integrating $\partial_x^2 u$ in $x$, we see that $u$ is differentiable in $x$ and
  \begin{equation*}
    \sup_{t \in [T_1,T_2]} \op{Lip}[\partial_xu(t, \anon)] = 2\sup_{[T_1, T_2]} A < \infty.
  \end{equation*}
  Finally, when $\partial_x^2 u_0 \in L^\infty(\R)$, the remaining claim of the corollary follows from Proposition~\ref{prop:local}.
\end{proof}
Thus far, we have focused on the boundedness of $\partial_x^2 u$.
It turns out this implies the root-Lipschitz property.
\begin{lemma}
  \label{lem:strong-root-Lip}
  If $w \in \m{C}^1_{\loc}(\R)$ satisfies $w'' \in L^\infty(\R)$ and $w \geq 0$, then
  \begin{equation*}
    \op{Lip} \sqrt{w} \le \sqrt{\sup w''}.
  \end{equation*}
\end{lemma}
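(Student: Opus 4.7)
The plan is to derive the pointwise bound $w'(x)^2 \leq 2 M w(x)$, where $M \coloneqq \sup w''$, and then integrate it—with some care near the zero set of $w$—to obtain the root-Lipschitz estimate.

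First I would dispose of the degenerate regimes. If $M < 0$, then $w''$ is uniformly negative, so $w'$ decreases unboundedly and $w$ must eventually become negative, contradicting $w \geq 0$; hence $M \geq 0$. If $M = 0$, then $w \in \m{C}^1$ is concave and nonnegative on $\R$, hence constant, and the conclusion is immediate. In the remaining case $M > 0$, I would fix $x \in \R$ and consider the quadratic $P(y) \coloneqq w(x) + w'(x)(y-x) + \tfrac{M}{2}(y-x)^2$. Since $w'' \leq M$ almost everywhere, the function $w - P$ is concave on $\R$ and vanishes with zero derivative at $y = x$, so $w \leq P$ everywhere. Combined with $w \geq 0$, this forces $P \geq 0$ on $\R$, and the nonpositive-discriminant condition for the quadratic $P(y)$ in $y - x$ yields $w'(x)^2 \leq 2 M w(x)$.

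Second, wherever $w(x) > 0$, the chain rule gives $|(\sqrt{w})'(x)| = |w'(x)|/(2\sqrt{w(x)}) \leq \sqrt{M/2}$. To promote this to a global Lipschitz bound on $\R$—since $\sqrt{w}$ is only continuous, not differentiable, at points where $w = 0$—I would fix $x < y$ and split into two cases. If $w > 0$ on $[x, y]$, direct integration yields $|\sqrt{w(x)} - \sqrt{w(y)}| \leq \sqrt{M/2}(y - x)$. Otherwise, the zero set $Z \coloneqq \{z \in [x, y] : w(z) = 0\}$ is closed and nonempty; setting $\alpha \coloneqq \min Z$ and $\beta \coloneqq \max Z$, continuity gives $\sqrt{w(\alpha)} = \sqrt{w(\beta)} = 0$, while $w > 0$ on $[x, \alpha) \cup (\beta, y]$. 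Integrating the derivative bound on those subintervals gives $\sqrt{w(x)} \leq \sqrt{M/2}(\alpha - x)$ and $\sqrt{w(y)} \leq \sqrt{M/2}(y - \beta)$, whence the triangle inequality delivers $|\sqrt{w(x)} - \sqrt{w(y)}| \leq \sqrt{M/2}(y - x) \leq \sqrt{M}(y - x)$.

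I do not anticipate a serious obstacle; the only subtlety is the extension of the pointwise derivative bound across the zero set, handled above by exploiting that $\sqrt{w}$ vanishes on $\partial\{w > 0\}$. As a byproduct, this strategy produces the sharper constant $\sqrt{M/2}$, which is attained by $w(x) = \tfrac{M}{2} x^2$, so the stated bound $\sqrt{M}$ is not tight.
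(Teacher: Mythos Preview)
Your proof is correct and in fact yields the sharp constant $\sqrt{M/2}$, as you note. The overall strategy matches the paper's---both derive a pointwise inequality $w'(x)^2 \le C M\, w(x)$ from $w \ge 0$ and $w'' \le M$, then divide by $2\sqrt{w}$---but the mechanisms differ. The paper integrates $w'$ backward from $x$ over the interval $[x - w'(x)/(2M),\,x]$, uses $w'' \le M$ to keep $w' \ge w'(x)/2$ there, and invokes $w \ge 0$ at the left endpoint to conclude $w(x) \ge w'(x)^2/(4M)$; this gives only $|(\sqrt{w})'| \le \sqrt{M}$. Your route via the quadratic majorant $P$ and the nonpositive-discriminant condition is the classical Glaeser argument: it is cleaner, global rather than one-sided, and recovers the optimal factor. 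You also spell out the degenerate cases $M \le 0$ and the passage from the pointwise derivative bound on $\{w>0\}$ to a global Lipschitz estimate across the zero set, both of which the paper leaves implicit.
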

\begin{proof}
  Consider $x \in \R$ such that $w(x) > 0$ and note that
  \begin{equation}
    \label{eq:sqrt-deriv-point}
    (\sqrt{w})'(x) = \frac{w'(x)}{2 \sqrt{w(x)}}.
  \end{equation}
  We may assume without loss of generality that $w'(x) \geq 0$ (otherwise, reverse space).
  Let $A \coloneqq \sup w''$.
  Then for all $y\in [x-w'(x)/(2A), x]$, we have $w'(y) \ge w'(x)/2$.
  Since $w \geq 0$, this implies that
  \begin{equation*}
    w(x) \ge \int_{x-w'(x)/(2A)}^x w'(y) \d y \ge \frac{w'(x)^2}{4A}.
  \end{equation*}
  Rearranging and taking the square root, \eqref{eq:sqrt-deriv-point} yields
  \begin{equation*}
    |(\sqrt{w})'(x)| \leq \sqrt{A},
  \end{equation*}
  for all $x$ such that $w(x)>0$. This implies the statement of the lemma.
\end{proof}
\begin{corollary}
  \label{cor:strong-implies-RL}
  If $u$ is a strong solution of \eqref{eq:main} on a time interval $[0, T)$, then for all $0 < T_1 \leq T_2 < T$, we have
  \begin{equation}
    \label{eq:local-root-Lip}
    \sup_{t \in [T_1, T_2]} \op{Lip}\sqrt{u(t, \anon)} < \infty.
  \end{equation}
  If, moreover, $\partial_x^2 u_0 \in L^\infty(\R)$, then \eqref{eq:local-root-Lip} holds with $T_1 = 0$.
\end{corollary}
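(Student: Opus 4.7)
The plan is to combine the two preceding results directly: Corollary~\ref{cor:regular} delivers a uniform bound on $\partial_x^2 u$ on time intervals away from $0$, and Lemma~\ref{lem:strong-root-Lip} converts such a bound into a root-Lipschitz bound at each fixed time. The two statements of the corollary will then fall out as parallel cases.

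More concretely, first I would fix $0 < T_1 \leq T_2 < T$ and apply Corollary~\ref{cor:regular} to obtain a finite constant $M \coloneqq \esssup_{[T_1, T_2]\times \R} \partial_x^2 u$. For each $t \in [T_1, T_2]$, the same corollary provides that $u(t, \anon)$ is differentiable in $x$ with $\partial_x u(t, \anon)$ Lipschitz, so $u(t, \anon) \in \m{C}^1_{\loc}(\R)$ with $\partial_x^2 u(t,\anon) \in L^\infty(\R)$ and $\sup_x \partial_x^2 u(t, \anon) \leq M$. Since strong solutions are nonnegative, Lemma~\ref{lem:strong-root-Lip} applies to $w = u(t, \anon)$ and yields
\begin{equation*}
  \op{Lip} \sqrt{u(t, \anon)} \leq \sqrt{\sup_{x\in\R}\partial_x^2 u(t, \anon)} \leq \sqrt{M}.
\end{equation*}
Taking the supremum over $t \in [T_1, T_2]$ gives \eqref{eq:local-root-Lip}.

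For the second assertion, I would invoke the final sentence of Corollary~\ref{cor:regular}: when $\partial_x^2 u_0 \in L^\infty(\R)$, the bound $\partial_x^2 u \in L^\infty([0, T_2]\times\R)$ extends down to $T_1 = 0$. Repeating the argument above on $[0, T_2]$ then produces \eqref{eq:local-root-Lip} with $T_1 = 0$, completing the proof.

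There is no serious obstacle here; the whole statement is essentially a packaging of the two preceding results, and the only minor point to check is that Lemma~\ref{lem:strong-root-Lip} may legitimately be applied at each individual time slice, which follows because Corollary~\ref{cor:regular} provides pointwise-in-$t$ regularity of $u(t, \anon)$ rather than merely an integrated statement.
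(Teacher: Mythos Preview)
Your proposal is correct and matches the paper's proof exactly: the paper simply states that the result follows from Corollary~\ref{cor:regular} and Lemma~\ref{lem:strong-root-Lip}, and your argument spells out precisely how those two results combine.
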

\begin{proof}
  This follows from Corollary~\ref{cor:regular} and Lemma~\ref{lem:strong-root-Lip}.
\end{proof}
As these results indicate, there is a tight relationship between strong solutions and the root-Lipschitz property.
This will become even more apparent in Section~\ref{sec:existence}, wherein we derive these properties simultaneously from other bounds.
This link leads us to speculate that the root-Lipschitz condition may be \emph{equivalent} to our notion of strong solutions.
We are led to ask: are root-Lipschitz solutions of \eqref{eq:main} unique?
Precisely, if $\op{Lip}\sqrt{u_0} < \infty$, does \eqref{eq:main} admit a unique weak solution $u$ on a nonempty time interval $[0, \bar{T})$ such that $u$ satisfies \eqref{eq:root-Lip} for all $T \in [0, \bar{T})$?
We leave this to future investigation.

\subsection{The zero set}
We now consider the structure of the zero set where \eqref{eq:main} degenerates.
\begin{proposition}
  \label{prop:zeros}
  If $u$ is a strong solution of \eqref{eq:main} on $[0, T)$, then for all $t \in [0, T)$,
  \begin{equation*}
    \{u(t, \anon) = 0\} = \{u_0 = 0\}.
  \end{equation*}
\end{proposition}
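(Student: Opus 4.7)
The approach is to analyze the single-point trajectory $f(t) \coloneqq u(t, x_0)$ for an arbitrary $x_0 \in \R$ and to show that $f \equiv 0$ on $[0, T)$ if and only if $f(0) = u_0(x_0) = 0$. Since $f \colon [0, T) \to [0, \infty)$ is continuous, the set $P \coloneqq \{t \in [0, T) : f(t) > 0\}$ is relatively open. My plan is to show that any connected component $\m{J}$ of $P$ with an endpoint $e \in [0, T)$ at which $f(e) = 0$ leads to a contradiction, forcing $P$ to equal either $\emptyset$ or all of $[0, T)$; this dichotomy is then determined by the sign of $f(0)$ and yields the proposition.

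On any such component $\m{J}$, every point $(t, x_0)$ with $t \in \m{J}$ lies in $\{u > 0\}$. By Lemma~\ref{lem:positive}, $u$ is smooth there and satisfies \eqref{eq:main} pointwise, so $f$ is $\m{C}^1$ on $\m{J}$ with $\dot f(t) = \tfrac{1}{2} f(t)\, \partial_x^2 u(t, x_0)$. Dividing by $f > 0$ and integrating yields
\begin{equation*}
  \log f(\tau) = \log f(t_0) + \tfrac{1}{2}\int_{t_0}^\tau \partial_x^2 u(s, x_0)\, \d s \qquad \text{for all } \tau, t_0 \in \m{J}.
\end{equation*}
The crucial bound is $\abs{\partial_x^2 u(s, x_0)} \leq \norm{\partial_x^2 u(s, \anon)}_{L^\infty(\R)}$, whose right-hand side is integrable in $s$ on compact subintervals of $[0, T)$ by the strong-solution hypothesis. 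Driving $\tau \to e$ within $\m{J}$, the integral above tends to a finite limit, while $\log f(\tau) \to -\infty$ since $f(\tau) \to 0$. This contradicts the finiteness of $\log f(t_0)$ and rules out any such component.

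The delicate step is justifying the classical ODE for $f$ on $P$, which rests on both Lemma~\ref{lem:positive} and the $L^1_t L^\infty_x$ control built into the strong-solution definition. These are precisely the properties that arbitrary weak solutions can lack, and they are what rule out the isolated-zero-disappearance phenomenon documented in~\cite{Ugh86}.
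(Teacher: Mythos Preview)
Your argument is correct and takes a genuinely different route from the paper's proof. The paper treats the two inclusions separately: to show that no new zeros appear, it invokes the comparison principle (Proposition~\ref{prop:comparison}) with the subsolution $\exp(-\|\partial_x^2 u_0\|_\infty t)\,u_0$, together with Corollary~\ref{cor:regular} to reduce to the case $\partial_x^2 u_0 \in L^\infty$; to show that no zeros disappear, it tests against a mollifier $\theta_\eps$ centred at $x_0$, applies Gr\"onwall to $\int u(t,\cdot)\theta_\eps$, and sends $\eps \to 0$. Your approach instead handles both directions in one stroke by tracking the single trajectory $f(t)=u(t,x_0)$: on the positive set Lemma~\ref{lem:positive} gives the pointwise ODE $\dot f = \tfrac12 f\,\partial_x^2 u(\cdot,x_0)$, and integrating $\log f$ against the $L^1_tL^\infty_x$ bound on $\partial_x^2 u$ excludes any finite-time touching of zero from either side. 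This is more elementary (no comparison principle, no mollifier, no appeal to Corollary~\ref{cor:regular}) and symmetric in the two containments; the paper's route, on the other hand, yields the quantitative lower bound $u \ge \exp(-At)\,u_0$ as a by-product.

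One small imprecision worth noting: Lemma~\ref{lem:positive} provides smoothness only on $(0,T)\times\R$, so the ODE for $f$ is strictly justified on $\m{J}\cap(0,T)$ rather than on all of $\m{J}$ when $0\in\m{J}$. This does not affect your argument, since the contradiction is obtained by sending $\tau$ to the endpoint $e$ at which $f(e)=0$, and one may always fix the reference point $t_0$ in $(0,T)$; continuity of $u$ at $t=0$ (from the weak-solution definition) then handles the limit $\tau\to 0^+$ in the case $e=0$.
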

\begin{proof}
  We first show that $u$ does not develop new zeros.
  Suppose $\partial_x^2 u_0 \in L^\infty(\R)$.
  Then $\exp(-\|\partial_x^2 u_0\|_{\infty} t) u_0$ is a subsolution of \eqref{eq:main} in the sense of Proposition~\ref{prop:comparison}.
  Thus our comparison principle implies that $u \geq \exp(-\sup |\partial_x^2 u_0| t) u_0$.
  In particular, $\{u_0 > 0\} \subset \{u(t, \anon) > 0\}$.
  Now consider a general strong solution, which need not have $\partial_x^2 u_0 \in L^\infty(\R)$.
  By Corollary~\ref{cor:regular}, $\partial_x^2u(t, \anon) \in L^\infty(\R)$ for all $t \in (0, T)$.
  Thus by the reasoning above, $\{u(s, \anon) > 0\} \subset \{u(t, \anon) > 0\}$ for all $0 < s \leq t < T$.
  Now, $u$ is continuous, so if $u_0(x_0) > 0$, we must have $u(t, x_0) > 0$ for sufficiently small $t$.
  It follows that $\{u_0 > 0\} \subset \{u(t, \anon) > 0\}$ and hence $\{u_0 = 0\} \supset \{u(t, \anon) = 0\}$ for all $t \in (0, T)$.

  We next show that $u$ does not lose zeros.
  Suppose $u_0(x_0) = 0$.
  Let $\theta \in \m{C}_\cc^\infty(\R)$ satisfy $\int \theta = 1$ and define $\theta_\eps(x) \coloneqq \eps^{-1}\theta\big(\tfrac{x - x_0}{\eps}\big)$ for $\eps > 0$.
  Multiplying \eqref{eq:main} by $\theta_\eps$ and integrating in time and space, we find
  \begin{equation*}
    \int_{\R} u(t, x) \theta_\eps(x) \d x = \int_{\R} u_0(x) \theta_\eps(x) \d x + \frac{1}{2} \int_{[0, t] \times \R} \partial_x^2 u(s, x) u(s, x) \theta_\eps(x) \d x \ds s
  \end{equation*}
  for all $t \in (0, T)$.
  Hence
  \begin{equation*}
    \int_{\R} u(t, \anon) \theta_\eps \leq \int_{\R} u_0 \theta_\eps + \frac{1}{2} \int_0^t \|\partial_x^2 u(s, \anon)\|_{L^\infty(\R)} \int_{\R} u(s, x) \theta_\eps(x) \d x \ds s.
  \end{equation*}
  Applying Gr\"onwall to the quantity $\int_{\R} u(t, \anon) \theta_\eps$, we find
  \begin{equation*}
    \int_{\R} u(t, \anon) \theta_\eps \leq \exp\left(\frac{1}{2} \int_0^t \|\partial_x^2 u(s, \anon)\|_{L^\infty(\R)} \d s\right) \int_{\R} u_0 \theta_\eps.
  \end{equation*}
  Because $\partial_x^2 u \in L_t^1 L_x^\infty$, the time integral is finite and independent of $\eps$.
  Taking $\eps \to 0$ and using the continuity of $u$, we see that $u(t, x_0) = 0$, as desired.
  This shows that $\{u_0 = 0\} \subset \{u(t, \anon) = 0\}$ and concludes the proof.
\end{proof}
We use the strong solution property to prove both directions of containment in Proposition~\ref{prop:zeros}.
As noted in the introduction,~\cite{DPL87} constructs weak solutions of \eqref{eq:main} with growing zero sets, while~\cite{Ugh86} constructs a solution in which an isolated zero of $u_0$ disappears.
Therefore, Proposition~\ref{prop:zeros} fails for weak solutions.

In fact, we anticipate that zero preservation is sufficient for uniqueness.
That is, given Lipschitz initial data $u_0$, we expect that there is a unique weak solution that preserves the zeros of $u_0$.
By Proposition~\ref{prop:zeros}, this solution would coincide with the unique strong solution so long as the latter exists.
However, it is not clear to us how to exploit zero preservation alone in a proof of uniqueness.
For this reason, we choose to work within the strong framework, and leave the above points to future investigation.

We next show that the evolution \eqref{eq:main} occurs independently on intervals separated by zeros.
This is a form of locality.
\begin{proposition}
  \label{prop:weak-cut-paste}
  Suppose $u$ is a weak solution of \eqref{eq:main} satisfying $u|_{\partial I} = 0$ for some nonempty open interval $I \subset \R$.
  Then $\tbf{1}_I u$ is a weak solution of \eqref{eq:main}.
  Conversely, let $\s{I}$ be a countable collection of disjoint nonempty open intervals $I \subset \R$.
  Suppose $(u_I)_{I \in \s{I}}$ is a collection of weak solutions of \eqref{eq:main} such that $u_I|_{I^c} = 0$.
  If the sum $u \coloneqq \sum_{I \in \s{I}} u_I$ satisfies $u \in \m{C}_t W_{\loc, x}^{1,\infty}$, then $u$ is a weak solution of \eqref{eq:main}.
\end{proposition}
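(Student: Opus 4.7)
\emph{Forward direction.} The plan is to insert a spatial cutoff of $I$ into the weak formulation for $u$. Let $\phi_\varepsilon \in \m{C}_\cc^\infty(I)$ be a cutoff equal to $1$ on $I_\varepsilon \coloneqq \{x \in I : \op{dist}(x, \partial I) > \varepsilon\}$ and satisfying $|\phi_\varepsilon'| \lesssim \varepsilon^{-1}$. Given $\psi \in \m{C}_\cc^\infty([0, T) \times \R)$, I would plug $\phi_\varepsilon \psi$ into \eqref{eq:weak} for $u$ and pass to the limit as $\varepsilon \searrow 0$. Expanding $\partial_x(\phi_\varepsilon \psi u) = \phi_\varepsilon \partial_x(\psi u) + \psi u \phi_\varepsilon'$ splits the resulting flux integrand into a principal term, which converges by dominated convergence to $\partial_x(\psi \mathbf{1}_I u) \partial_x(\mathbf{1}_I u)$ (using the identity $\partial_x(\mathbf{1}_I u) = \mathbf{1}_I \partial_x u$ a.e., which holds because $u$ is locally Lipschitz and vanishes on $\partial I$), and an error term $\int \psi u \phi_\varepsilon' \partial_x u$. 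The first two terms of \eqref{eq:weak} similarly converge to their $\mathbf{1}_I u$ analogues.

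\emph{Boundary error.} Controlling this error is the crux. Because $u \in \m{C}_t W_{\loc,x}^{1,\infty}$ and $u$ vanishes on $\partial I$, $u$ is uniformly Lipschitz in space on $\supp \psi$, with $|u(t, x)| \leq L \op{dist}(x, \partial I)$ there. On $\supp \phi_\varepsilon'$ this yields $|u| \leq L\varepsilon$; combined with $|\phi_\varepsilon'| \lesssim \varepsilon^{-1}$ on a set of spatial measure $\lesssim \varepsilon$ and uniform bounds on $\partial_x u$ and $\psi$, the error is $\m{O}(\varepsilon)$ and vanishes in the limit. The same Lipschitz bound near $\partial I$ also confirms that $\mathbf{1}_I u \in \m{C}_t W_{\loc,x}^{1,\infty}$, so $\mathbf{1}_I u$ is indeed a weak solution.

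\emph{Converse and main obstacle.} For the reverse direction, I would apply \eqref{eq:weak} to each $u_I$ with the common test function $\psi$ and sum over $I \in \s{I}$. Since the intervals are disjoint and each $u_I$ is supported in $\bar I$, on $I$ we have $u = u_I$ and $\partial_x u = \partial_x u_I$, while off $\bigcup_I \bar I$ both $u$ and its weak derivative vanish a.e. Hence $\sum_I \partial_x(\psi u_I) \partial_x u_I = \partial_x(\psi u) \partial_x u$ pointwise a.e., and the interchange of sum and integration is justified by the compact support of $\psi$ together with the uniform local boundedness of $u$. The main obstacle is the boundary error estimate in the forward direction: both the $W_{\loc,x}^{1,\infty}$ regularity and the vanishing of $u$ on $\partial I$ are essential to drive the cutoff correction to zero, and without either, the truncation $\mathbf{1}_I u$ would introduce a genuine distributional flux contribution at $\partial I$.
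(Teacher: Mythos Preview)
Your proposal is correct and follows the same route the paper takes: the paper's proof simply invokes Theorem~2 of \cite{DPL87}, noting that its argument shows \eqref{eq:weak} remains valid with $\tbf{1}_I\psi$ in place of $\psi$, and then obtains the paste direction from the decomposition $\tbf{1}_{\bigcup I}\psi = \sum_I \tbf{1}_I\psi$. Your cutoff argument with $\phi_\varepsilon$ and the Lipschitz boundary estimate is exactly the standard way to justify that replacement, so you have spelled out what the paper leaves as a citation; one small cosmetic point is that for unbounded $I$ the cutoff $\phi_\varepsilon$ is not compactly supported in $I$, but since $\psi$ already has compact support the product $\phi_\varepsilon\psi$ is admissible regardless.
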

\noindent
Informally, one can ``cut and paste'' weak solutions at locations where they vanish.
\begin{proof}
  This is essentially Theorem~2 of~\cite{DPL87}.
  Indeed, the proof of that theorem shows that if $u$ is a weak solution of \eqref{eq:main} such that $u|_{\partial I} = 0$, then \eqref{eq:weak} holds when the test function $\psi$ is replaced by $\tbf{1}_I \psi$.
  This implies the ``cut'' portion of the proposition: $\tbf{1}_I u$ is also a weak solution of \eqref{eq:main}.
  The ``paste'' portion concerning the sum $u = \sum_I u_I$ follows from the decomposition $\tbf{1}_{\bigcup I} \psi = \sum_I \tbf{1}_I \psi$.
\end{proof}
In turn, we obtain a strong formulation.
\begin{corollary}
  \label{cor:strong-cut-paste}
  Suppose $u$ is a strong solution of \eqref{eq:main} satisfying $u_0|_{\partial I} = 0$ for some nonempty open interval $I \subset \R$.
  Then $\tbf{1}_I u$ is a strong solution of \eqref{eq:main}.
  Conversely, let $\s{I}$ be as above.
  Suppose $(u_I)_{I \in \s{I}}$ is a collection of strong solutions of \eqref{eq:main} such that $u_I(0, \anon)|_{I^c} = 0$.
  If the sum $u \coloneqq \sum_{I \in \s{I}} u_I$ satisfies $\partial_x^2 u \in L_t^1L_x^\infty$, $\partial_x u \in \m{C}_t L_{\loc,x}^\infty$, and $u \in \braket{x}^2L_{t,x}^\infty$, then $u$ is a strong solution of \eqref{eq:main}.
\end{corollary}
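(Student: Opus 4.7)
The plan is to reduce each direction of the corollary to the weak cut-and-paste principle of Proposition~\ref{prop:weak-cut-paste} and then verify the additional strong-solution conditions directly, exploiting Corollary~\ref{cor:regular} and Proposition~\ref{prop:zeros} to control behavior at $\partial I$.

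For the \emph{cut} direction, suppose $u$ is a strong solution on $[0, T)$ with $u_0|_{\partial I} = 0$. Proposition~\ref{prop:zeros} immediately gives $u(t, \anon)|_{\partial I} = 0$ for every $t \in [0, T)$, so Proposition~\ref{prop:weak-cut-paste} applies and $\tbf{1}_I u$ is already a weak solution. The bound $\tbf{1}_I u \in \braket{x}^2 L_{t,x}^\infty$ is inherited directly from $u$, so the only nontrivial step is to verify $\partial_x^2(\tbf{1}_I u) \in L_t^1 L_x^\infty$. The key observation is that, by Corollary~\ref{cor:regular}, for every $t \in (0, T)$ with $\partial_x^2 u(t, \anon) \in L^\infty(\R)$---a full-measure set of times by the strong-solution assumption---the function $u(t, \anon)$ is continuously differentiable, nonnegative, and vanishes at each finite $x_0 \in \partial I$. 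A minimum argument then forces $\partial_x u(t, x_0) = 0$ at each such boundary point, so integration by parts against an arbitrary test function shows that the distributional identity $\partial_x^2(\tbf{1}_I u) = \tbf{1}_I \partial_x^2 u$ holds with no delta contribution at $\partial I$. This representation inherits the required $L_t^1 L_x^\infty$ bound from $\partial_x^2 u$, completing the cut direction.

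For the \emph{paste} direction, suppose $(u_I)_{I \in \s{I}}$ are strong solutions on $[0, T)$ with $u_I(0, \anon)|_{I^\cc} = 0$. Applying Proposition~\ref{prop:zeros} to each $u_I$ extends the vanishing to $u_I(t, \anon)|_{I^\cc} = 0$ for all $t$, which in particular ensures $u_I|_{\partial I} = 0$ throughout time. The assumed bounds $u \in \braket{x}^2 L_{t,x}^\infty$ and $\partial_x u \in \m{C}_t L_{\loc, x}^\infty$, together with pointwise-in-time continuity of each summand, place $u$ in $\m{C}_t W_{\loc, x}^{1, \infty}$. Thus Proposition~\ref{prop:weak-cut-paste} yields that $u$ is a weak solution, and the remaining strong-solution requirements $\partial_x^2 u \in L_t^1 L_x^\infty$ and $u \in \braket{x}^2 L_{t,x}^\infty$ are precisely the hypotheses.

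The main obstacle will be the technical verification, in the cut direction, that the localization $\tbf{1}_I u$ produces no singular contribution to $\partial_x^2$ at $\partial I$. Everything hinges on establishing $\partial_x u(t, \anon) = 0$ on $\partial I$ for a.e.\ $t$, which I expect to extract cleanly from Corollary~\ref{cor:regular} combined with the nonnegativity built into the weak-solution definition. The paste direction is essentially bookkeeping, since the hypotheses have already been tailored to match the strong-solution definition.
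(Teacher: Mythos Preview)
Your proposal is correct and follows essentially the same route as the paper: invoke Proposition~\ref{prop:zeros} to propagate the vanishing on $\partial I$ (or $I^\cc$) to all times, apply Proposition~\ref{prop:weak-cut-paste} to obtain a weak solution, and then verify the strong-solution bounds using that $\partial_x u$ vanishes on $\partial I$ so that cutting does not introduce a singular contribution to $\partial_x^2$. The paper phrases this last point as ``setting $u$ to $0$ outside $I$ does not increase the Lipschitz constant of $\partial_x u$,'' which is equivalent to your distributional identity $\partial_x^2(\tbf{1}_I u) = \tbf{1}_I \partial_x^2 u$.
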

\begin{proof}
  By Proposition~\ref{prop:zeros}, strong solutions preserve the zero set.
  For the ``cut'' part, it follows that $u|_{\partial I} = 0$.
  Hence by Proposition~\ref{prop:weak-cut-paste}, $\tbf{1}_I u$ is a weak solution of \eqref{eq:main}.
  It is a strong solution because $u$ is strong; note that setting $u$ to $0$ outside $I$ does not increase the Lipschitz constant of $\partial_x u$ because $\partial_x u = 0$ on $I$.

  For the ``paste'' part, we likewise see that $u_I|_{I^c} = 0$.
  By our hypothesis on $\partial_x u$, Proposition~\ref{prop:weak-cut-paste} implies that $u$ is a weak solution of \eqref{eq:main}.
  Then our hypotheses on $\partial_x^2 u$ and $u$ ensure that $u$ is a strong solution.
\end{proof}
We observe that strong uniqueness and the preservation of zeros makes the strong formulation easier to work with.
In particular, if we have a collection of disjointly-supported initial conditions $(u_0^I)_I$ that produce strong solutions $u_I$ satisfying suitable uniform bounds, we can construct a strong solution $u = \sum_I u_I$ from the initial condition $\sum_I u_0^I$.
We can thus focus on constructing strong solutions on individual intervals.
We take this approach in Section~\ref{sec:existence}.

Our next proposition shows that strong solutions to \eqref{eq:main} are ``almost classical solutions'' in the sense of \cite[Definition~1.13]{DG}.
This enables Itô's formula, which links strong solutions of \eqref{eq:main} with the forward-backward SDE~\eqref{eq:FBSDE}.
\begin{proposition}
  \label{prop:C1}
  Let $u$ be a strong solution of \eqref{eq:main} on $[0,T)$.
  Then $u$ is continuously differentiable on $(0,T)\times\R$ and moreover satisfies the differential equation \eqref{eq:main} almost everywhere on $(0, T) \times \R$.
\end{proposition}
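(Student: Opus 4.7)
The plan is to exploit the partition provided by Proposition~\ref{prop:zeros}: setting $Z \coloneqq \{u_0 = 0\}$, we have $\{u(t, \anon) = 0\} = Z$ for every $t \in [0, T)$. I would verify continuous differentiability and the pointwise PDE separately on the positive set $(0, T) \times Z^\cc$ and the zero set $(0, T) \times Z$, then reconcile them across the frontier $\partial Z$. On $(0, T) \times Z^\cc$, Lemma~\ref{lem:positive} already says $u$ is smooth and satisfies \eqref{eq:main} pointwise, so both claims hold there immediately. On $(0, T) \times Z$, the function $t \mapsto u(t, x)$ is identically zero for each $x \in Z$, giving $\partial_t u \equiv 0$ on this set; since $\partial_x^2 u$ is a.e.\ finite by Corollary~\ref{cor:regular} and $u = 0$ here, $u \partial_x^2 u$ vanishes a.e.\ on this set, so \eqref{eq:main} holds a.e.\ on $(0, T) \times \R$ once the two regimes are combined.

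The substantive work is to glue $\partial_x u$ and $\partial_t u$ continuously across $\partial Z$. For $\partial_x u$, the definition of a weak solution gives $u \in \m{C}([0, T); W^{1,\infty}_{\loc}(\R))$, so $\partial_x u$ depends continuously on $t$ in the $L^\infty_{\loc}(\R)$ topology. Corollary~\ref{cor:regular} additionally supplies uniform Lipschitz regularity in $x$ on every strip $[T_1, T_2] \times \R$ with $0 < T_1 \leq T_2 < T$. Combining uniform equicontinuity in $x$ with locally uniform continuity in $t$ yields joint continuity of $\partial_x u$ on $(0, T) \times \R$.

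For $\partial_t u$, continuity at positive points follows from the smoothness of $u$ on $Z^\cc$, so I would focus on a frontier point $(t_0, x_0)$ with $x_0 \in Z$. The claim is $\partial_t u(t, x) \to 0$ as $(t, x) \to (t_0, x_0)$. Sequences lying in $(0, T) \times Z$ contribute $0$ trivially. For a sequence $(t_n, x_n) \to (t_0, x_0)$ with $u(t_n, x_n) > 0$, the pointwise PDE (Lemma~\ref{lem:positive}) yields
\begin{equation*}
  \partial_t u(t_n, x_n) = \tfrac{1}{2} u(t_n, x_n) \partial_x^2 u(t_n, x_n);
\end{equation*}
continuity of $u$ drives the first factor to zero, while the second is bounded by $\|\partial_x^2 u\|_{L^\infty([T_1, T_2] \times \R)} < \infty$ for any $0 < T_1 < t_0 < T_2 < T$, via Corollary~\ref{cor:regular} applied to the smooth representative on $Z^\cc$ furnished by Lemma~\ref{lem:positive}.

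The main obstacle is precisely this last continuity check at the frontier of the zero set: one has no direct pointwise regularity for $\partial_x^2 u$ where $u$ vanishes, so the argument must exploit the algebraic suppression provided by the prefactor of $u$ itself. The key input is Corollary~\ref{cor:regular}, which upgrades the $L^1_t L^\infty_x$ hypothesis on $\partial_x^2 u$ to uniform $L^\infty$ boundedness on time strips away from $t = 0$---exactly the control needed to force $u \partial_x^2 u \to 0$ as one approaches $\partial Z$.
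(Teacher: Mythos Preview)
Your proposal is correct and follows essentially the same approach as the paper: partition via Proposition~\ref{prop:zeros}, invoke Lemma~\ref{lem:positive} on the positive set, and use Corollary~\ref{cor:regular} to bound $\partial_x^2 u$ uniformly on time strips so that $u\,\partial_x^2 u \to 0$ at the frontier of the zero set. The only minor variation is in the continuity of $\partial_x u$: you appeal to the hypothesis $u \in \m{C}([0,T); W^{1,\infty}_{\loc})$ for time-continuity, whereas the paper instead observes directly that $\partial_x u(s,x) = 0$ for every $s$ whenever $x \in Z$ (since $u(\cdot,x) \equiv 0$ and $u \geq 0$), reducing that step to spatial Lipschitz regularity alone.
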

\begin{proof}
  To show that $u$ is continuously differentiable, it suffices to show that $\partial_t u$ and $\partial_x u$ exist and are continuous at each $(t,x)\in (0,T)\times\R$.
  By Lemma~\ref{lem:positive}, this is true at each $(t,x)$ such that $u(t,x)>0$.
  So consider some $(t,x)\in (0,T)\times\R$ such that $u(t,x)=0$.
  By Proposition~\ref{prop:zeros}, we have $u(s,x)=0$ for all $s\in (0,T)$, so $\partial_t u(t,x)=0$.
  Moreover, Corollary~\ref{cor:regular} ensures that $\partial_x u(t,x)$ exists; since $u$ is nonnegative, we have in fact have $\partial_x u(t, x) = 0$.

  We now show continuity at $(t, x)$, beginning with $\partial_x u$.
  Let $\m{I} \Subset (0, T)$ be a neighborhood of $t$.
  Fix $(s, y) \in \m{I} \times \R$ and observe that $\partial_x u(t, x) = 0 = \partial_xu(s, x)$.
  Therefore
  \begin{equation*}
    |\partial_x u(s,y)-\partial_x u(t,x)| = |\partial_x u(s,y)-\partial_x u(s,x)|\le \sup_{{s \in \m{I}}} \op{Lip}[\partial_x u(s, \anon)] |x-y|.
  \end{equation*}
  By Corollary~\ref{cor:regular}, $\partial_xu$ is continuous at $(t, x)$.

  We now consider the continuity of $\partial_t u$ at $(t,x)$.
  Let $\eps>0$.
  Since $u$ is continuous, there exists a neighborhood $\m{N} \Subset \m{I} \times \R$ such that $u|_{\m{N}} < \eps$.
  Take $(s, y) \in \m{N}$.
  If $u(s, y) = 0$, then we argued above that $\partial_tu(s,y) = 0 = \partial_tu(t,x)$.
  On the other hand, if $u(s, y) > 0$, then Lemma~\ref{lem:positive} implies that $u$ is smooth near $(s, y)$ and satisfies \eqref{eq:main} pointwise.
  Hence
  \begin{equation*}
    |\partial_tu(s,y)-\partial_tu(t,x)|=\frac12|u(s,y)\partial_x^2u(s,y)| \le \frac \eps 2 \sup_{s \in \m{I}} \|\partial_x^2u(s, \anon)\|_{L^\infty(\R)}.
  \end{equation*}
  By Corollary~\ref{cor:regular}, $\partial_t u$ is continuous at $(t, x)$.

  Finally, we show that $u$ satisfies the differential equation \eqref{eq:main} almost everywhere on $(0,T)\times\R$. On the set $\{u>0\}$, \eqref{eq:main} holds everywhere by Lemma~\ref{lem:positive}.
  On the set $\{u=0\}$, Proposition~\ref{prop:zeros} implies that both sides of \eqref{eq:main} are zero whenever the right side exists, and the right side exists almost everywhere since $\partial_x^2u\in L^\infty_{\loc}\big((0,T)\times\R\big)$.
  This completes the proof.
\end{proof}
\begin{remark}
  In fact, the above proof shows that although $u$ may not be twice differentiable everywhere, the product $u \partial_x^2 u$ has a continuous version.
  If we interpret \eqref{eq:main} with this continuous version on the right, then \eqref{eq:main} holds pointwise \emph{everywhere} in $(0, T) \times \R$.
  On the other hand, we do wish to emphasize that $u$ need not be twice differentiable everywhere.
  For example, $u(t, x) \coloneqq \tfrac{x_+^2}{1 - t}$ is a strong solution of \eqref{eq:main} on the time interval $[0, 1)$ that is not twice differentiable at $x = 0$.
\end{remark}

\subsection{Finite-time blow-up}
We now turn to finite-time blow-up for strong solutions.
This should be quite common, but to rigorously prove it, we need good control over various quantities of interest.
This motivates the following lemma.
\begin{lemma}
  \label{lem:smooth}
  If $u$ is a strong solution of \eqref{eq:main} on an interval $[0, \bar T)$ with $u_0 \in \m{C}_\cc^\infty(\R)$, then $u$ is smooth on $\R \times [0, \bar T)$.
\end{lemma}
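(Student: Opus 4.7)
\medskip\noindent\emph{Proof plan.}
By Corollary~\ref{cor:strong-cut-paste}, $u$ decomposes as $\sum_j \tbf{1}_{I_j} u$ along the connected components of $\{u_0 > 0\}$, so it suffices to prove smoothness when $u_0$ has a single interval of positivity $I = (a, b)$. Since $u_0 \in \m{C}_\cc^\infty(\R)$ is nonnegative and vanishes identically off $[a, b]$, all derivatives of $u_0$ vanish at $a$ and $b$---in other words, $u_0$ vanishes to infinite order at the endpoints. By Proposition~\ref{prop:zeros}, $\{u(t, \anon) > 0\} = (a, b)$ for every $t \in [0, \bar T)$, so Lemma~\ref{lem:positive} gives qualitative smoothness on $(a, b) \times (0, \bar T)$. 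Classical parabolic Schauder theory (e.g., \cite[Theorem~4.29]{L96}) applied on any compact $K \Subset (a, b)$, where $u_0 > 0$ and the equation is uniformly parabolic, extends this smoothness up to $t = 0$.

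What remains is to show smoothness at the endpoints. By symmetry I focus on $x = a$ and aim to show that $\partial_x^k u(t, x) \to 0$ as $x \to a^+$ uniformly in $t \in [0, T]$ for each $k \in \N$ and $T \in (0, \bar T)$; combined with $u \equiv 0$ on $\{x < a\}$, this yields $\m{C}^\infty$-smoothness of the zero extension. The first step is to establish polynomial decay: for every $N \in \N$ and $T \in (0, \bar T)$,
\begin{equation*}
  u(t, x) \leq C_{N, T} (x - a)^N \quad \text{on } [0, T] \times [a, b].
\end{equation*}
I would obtain this via Proposition~\ref{prop:comparison} on the narrow strip $[0, T] \times [a, a + \delta]$ against supersolutions of the form $\bar u_N(t, x) = f_N(t)(x - a)^N$, using the a priori bound $u \leq Q$ from Proposition~\ref{prop:local} as lateral data at $x = a + \delta$ and the infinite-order vanishing of $u_0$ to dominate the initial data; the function $f_N$ satisfies a Bernoulli-type ODE whose blow-up can be postponed past $T$ by taking $\delta$ small.

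The second step bootstraps polynomial decay of $u$ to decay of all its spatial derivatives via parabolic rescaling. For small $\lambda > 0$, $u_\lambda(t, x) \coloneqq \lambda^{-2} u(\lambda^2 t, a + \lambda x)$ solves \eqref{eq:main}, and the polynomial bound forces $u_\lambda \to 0$ uniformly on compact sets as $\lambda \to 0$ (for $N \geq 3$). Interior parabolic estimates for $u_\lambda$ on compact subsets of its positive set, undone by the rescaling, yield $|\partial_x^k u(t, x)| \lesssim (x - a)^{N - k}$ near $a$ for arbitrary $N$, giving the required smooth extension. The main technical obstacle lies in this last step: the rescaled equation is still degenerate at $y = 0$, so the interior estimates must be applied with uniform control in $\lambda$. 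I expect this to be tractable by iterating on $N$ together with the explicit form of the differentiated equations $\partial_t \partial_x^k u = \tfrac{1}{2} \partial_x^k(u \partial_x^2 u)$ already exploited in the proof of Proposition~\ref{prop:local}.
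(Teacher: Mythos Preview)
Your reduction to a single interval of positivity is not valid, and this is where the argument breaks. If $u_0 \in \m{C}_\cc^\infty(\R)$ has an isolated zero at $a$ (for instance $u_0(x) = (x-a)^2\phi(x)$ with $\phi$ a positive bump), then $a$ is a common endpoint of two components $I_-, I_+$ of $\{u_0 > 0\}$, the cut data $\tbf{1}_{I_+} u_0$ is only $\m{C}^1$ at $a$, and $u_0$ vanishes there only to order $2$. In that situation the solution satisfies $\partial_x^2 u(t,a) = 2\big(2u_0''(a)^{-1} - t\big)^{-1} \neq 0$ (this is precisely the Riccati computation in Proposition~\ref{prop:blow-up}), so $\tbf{1}_{I_+} u$ is \emph{not} smooth at $a$ and the polynomial bound $u \leq C_N(x-a)^N$ you aim for is simply false for $N \geq 3$. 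Smoothness of $u$ at an isolated zero comes from matching nonzero derivatives from the two sides, not from infinite-order vanishing; your supersolution/rescaling scheme cannot produce that. (A smaller point: the scaling $u_\lambda(t,x) = \lambda^{-2} u(\lambda^2 t, a + \lambda x)$ does not preserve \eqref{eq:main}; the invariant scaling is $u_\lambda(t,x) = \lambda^{-2} u(t, a + \lambda x)$ with no time dilation.)

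The paper's proof avoids the zero set entirely. It works globally on $\R$: after noting that $u$ remains compactly supported (Proposition~\ref{prop:zeros}) and that $u, \partial_x u, \partial_x^2 u \in L^\infty$ (Corollary~\ref{cor:regular}), it differentiates \eqref{eq:main} $k$ times, multiplies by $\partial_x^k u$, integrates by parts, and closes an $L^2$ energy estimate by Gr\"onwall and induction on $k$. Sobolev embedding then gives smoothness. This argument is indifferent to whether zeros are isolated or of finite order; that robustness is exactly what your localization strategy gives up.
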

\begin{proof}
  By Proposition~\ref{prop:zeros}, $u$ remains compactly supported.
  Fix $T \in (0, \bar{T})$; in the following, we work on $[0, T] \times \R$.
  Because $u$ is a strong solution with initially bounded second derivative, Corollary~\ref{cor:regular} implies that $u$, $\partial_xu$, and $\partial_x^2 u$ are all bounded and compactly supported.
  In particular, they each belong to $L_t^\infty L_x^p$ for all $p \in [1, \infty]$.
  
  Given $k \geq 3$, we formally differentiate \eqref{eq:main} $k$ times in space.
  Letting $z^{(k)} \coloneqq \partial_x^k u$, we find
  \begin{equation}
    \label{eq:higher}
    \partial_t z^{(k)} = \frac{1}{2}u \partial_x^2 z^{(k)} + \frac{k}{2} (\partial_x u) \partial_x z^{(k)} + \frac{k(k-1) + 2}{4} (\partial_x^2 u) z^{(k)} + F_k,
  \end{equation}
  where $F_k$ has the form
  \begin{equation*}
    F_k = \sum_{\ell = 3}^{k-1} a_{k\ell} (\partial_x^{k + 2 - \ell} u) (\partial_x^\ell u)
  \end{equation*}
  for certain coefficients $a_{k\ell} \in \R$.

  We show by induction that $z^{(k)} \in L_t^\infty L_x^2$ for all $k \geq 3$.
  The lemma will then follow from Sobolev embedding.
  For the base case, note that $F_3 = 0$.
  Multiplying \eqref{eq:higher} by $z^{(3)}$ and integrating by parts repeatedly, we obtain the energy estimate
  \begin{equation*}
    \der{}{t} \int_{\R} \big|z^{(3)}\big|^2 \lesssim \int_{\R} |\partial_x^2 u| \big|z^{(3)}\big|^2.
  \end{equation*}
  Since $\partial_x^2 u \in L_{t,x}^\infty$, Gr\"onwall yields $z^{(3)} \in L_t^\infty L_x^2$ as desired.
  In fact, a similar calculation implies that $z^{(3)} \in L_t^\infty L_x^p$ for all $p \in [1, \infty)$.

  Now fix $k \geq 4$ and suppose we have shown $z^{(\ell)} \in L_t^\infty L_x^2$ for all $\ell < k$.
  Similar manipulations of \eqref{eq:higher} and Young's inequality yield
  \begin{equation}
    \label{eq:Young}
    \der{}{t} \int_{\R} \big|z^{(k)}\big|^2 \lesssim \int_{\R} |\partial_x^2 u| \big|z^{(k)}\big|^2 + \int_{\R} \big|F_k z^{(k)}\big| \lesssim \int_{\R} \big|z^{(k)}\big|^2 + \int_{\R} \abs{F_k}^2.
  \end{equation}
  It thus suffices to show that $F_k \in L_t^\infty L_x^2$.
  By the inductive hypothesis and Sobolev embedding, $z^{(\ell)} \in L_{t,x}^\infty$ for all $\ell < k - 1$.
  When $k \geq 5$, this implies that $F_k \in L_t^\infty L_x^2$, as desired.
  The case $k = 4$ is special, for then $F_4 = 2\abs{z^{(3)}}^2$.
  We argued above that $z^{(3)} \in L_t^\infty L_x^4$, so again we obtain $F_4 \in L_t^\infty L_x^2$.
  Having checked every case, \eqref{eq:Young} and Gr\"onwall yield $z^{(k)} \in L_t^\infty L_x^2$.
  This completes the induction and the proof.
\end{proof}
\begin{remark}
  Strong solutions likely preserve smoothness beyond the case of compact support.
  However, the resulting regularity theory would need to handle diffusion of order $\braket{x}^2$ and drift of order $\braket{x}$.
  Both of these are ``borderline'' and should barely yield well-posedness.
  (They correspond to exponential martingales and exponential trajectories, respectively.)
  This smoothness result is not the primary aim of this work, so we leave the general problem to future investigation.
\end{remark}
\begin{proposition}
  \label{prop:blow-up}
  Let $u_0 \in \m{C}_\cc^\infty(\R)$ satisfy $u_0 \geq 0$, $u_0(0) = 0,$ and $u_0''(0) > 0$.
  Then \eqref{eq:main} does not admit a strong solution past the time $2u_0''(0)^{-1}$.
\end{proposition}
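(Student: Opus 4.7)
The plan is to derive a closed ODE for $s(t) \coloneqq \partial_x^2 u(t, 0)$, observe that it blows up at time $T_* \coloneqq 2 u_0''(0)^{-1}$, and use Lemma~\ref{lem:smooth} to obtain a contradiction if a strong solution exists past $T_*$. Assume for contradiction that there is a strong solution $u$ on $[0, \bar T)$ with $\bar T > T_*$.

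Since $u_0 \in \m{C}_\cc^\infty(\R)$, Lemma~\ref{lem:smooth} gives $u \in \m{C}^\infty([0,\bar T) \times \R)$. By Proposition~\ref{prop:zeros}, the hypothesis $u_0(0) = 0$ forces $u(t, 0) = 0$ for every $t \in [0, \bar T)$. Since $u \geq 0$ is smooth with an interior zero at $x = 0$, the point $x = 0$ is a minimum of $u(t, \anon)$, so $\partial_x u(t, 0) = 0$ and $s(t) = \partial_x^2 u(t, 0) \geq 0$ throughout.

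Next, I would differentiate \eqref{eq:main} twice in $x$ and evaluate at $x = 0$. Because $u$ is smooth, this is justified pointwise, and gives
\begin{equation*}
  \partial_t \partial_x^2 u = \tfrac{1}{2} u \, \partial_x^4 u + \partial_x u \, \partial_x^3 u + \tfrac{1}{2} (\partial_x^2 u)^2.
\end{equation*}
Setting $x = 0$ and using $u(t,0) = \partial_x u(t, 0) = 0$ collapses the first two terms, leaving the closed scalar ODE
\begin{equation*}
  \dot s(t) = \tfrac{1}{2} s(t)^2, \qquad s(0) = u_0''(0) > 0.
\end{equation*}
Integrating yields $s(t) = u_0''(0)\big/\big(1 - \tfrac{1}{2} u_0''(0) t\big)$, which diverges as $t \nearrow T_*$.

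This contradicts the smoothness of $u$ on $[0, \bar T) \times \R$, since $s$ must be continuous (and hence locally bounded) at the interior time $T_* < \bar T$. Hence no strong solution extends past $T_*$. The only subtle point to verify carefully is the rigorous passage from \eqref{eq:main} to the ODE for $s$: one must justify differentiating under the pointwise identity at a zero of $u$, and this is exactly what Lemma~\ref{lem:smooth} supplies. Everything else is routine ODE analysis once the identity $\dot s = s^2/2$ is in hand.
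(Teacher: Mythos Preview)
Your proposal is correct and follows essentially the same route as the paper: invoke Lemma~\ref{lem:smooth} for smoothness, use Proposition~\ref{prop:zeros} to get $u(t,0)=\partial_x u(t,0)=0$, derive the Riccati ODE $\dot s = \tfrac{1}{2}s^2$ at $x=0$, and integrate to blow-up at $2u_0''(0)^{-1}$. The only cosmetic difference is that the paper phrases the contradiction via the non-integrability of $t\mapsto \partial_x^2 u(t,0)$ against the strong-solution condition $\partial_x^2 u \in L_t^1 L_x^\infty$, whereas you use the continuity of $s$ coming from smoothness; both closures are valid.
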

\begin{proof}
  Let $T_* > 0$ denote the maximal time of existence of the strong solution, which may \emph{a priori} be infinite but is positive by Proposition~\ref{prop:local}.
  By Lemma~\ref{lem:smooth}, we can differentiate \eqref{eq:main} twice to obtain \eqref{eq:second} for $S \coloneqq \partial_x^2 u$.
  We know that $u$ preserves the zeros of $u_0$ (Proposition~\ref{prop:zeros}) so $u(t, 0) = 0$ and smoothness yields $\partial_x u(t, 0) = 0$ for all $t \in [0, T_*)$.
  Evaluating \eqref{eq:second} at $x = 0$, we find the closed Riccati equation
  \begin{equation}
    \label{eq:Riccati}
    \der{}{t} \partial_x^2u(t, 0) = \frac{1}{2}[\partial_x^2u(t, 0)]^2.
  \end{equation}
  This ODE holds on the time interval $(0, T_*)$.
  If $T_* < 2 u_0''(0)^{-1}$, we are done.
  Otherwise, we can integrate \eqref{eq:Riccati} to obtain
  \begin{equation*}
    \partial_x^2u(t, 0) = 2\left[2u_0''(0)^{-1} - t\right]^{-1} \ForAll t \in \big[0, 2u_0''(0)^{-1}\big).
  \end{equation*}
  This blow-up is not time-integrable, so $\partial_x^2 u \not \in L_t^1 L_x^\infty$ on $[0, 2u_0''(0)^{-1}) \times \R$.
  By the definition of strong solutions, $T_* \leq 2u_0''(0)^{-1}$.
\end{proof}
This proposition sets an upper bound on the lifetime of these strong solutions.
If $u_0$ satisfies Hypothesis~\ref{hyp:init} with $\kappa = \tfrac{1}{2} u_0''(0)$ (and any $\gamma$), then Theorem~\ref{thm:root-Lip} implies that this upper bound is sharp.
However, this bound need not be sharp if $u_0$ does not satisfy said hypothesis.
This is essentially a consequence of Theorem~\ref{thm:blow-up}; although Theorem~\ref{thm:blow-up} is stated on the half-line, a variant holds on the bounded interval.
Thus if $u_0$ is sufficiently large in the interior of its support, $u$ can cease to be strong before the time $2u_0''(0)^{-1}$ in Proposition~\ref{prop:blow-up}.

\section{Existence of strong, root-Lipschitz solutions}
\label{sec:existence}

We now construct strong, root-Lipschitz solutions for any initial data satisfying Hypothesis~\ref{hyp:init}.

\subsection{Classical existence}
We use the method of vanishing viscosity to construct the solution in Theorem~\ref{thm:root-Lip}.
Given $u_0$, we seek to solve \eqref{eq:main} with initial data $u_0 + \eps$ for $\eps > 0$.
Taking $\eps \searrow 0$, we then expect to recover a solution of \eqref{eq:main} with data $u_0$.

We are therefore interested in solving \eqref{eq:main} when the initial condition is uniformly positive (as is $u_0 + \eps$).
To control the solution, we wish to bracket it by an ordered pair of sub- and supersolutions.
This construction and comparison is standard when the solution is bounded, for then \eqref{eq:main} is uniformly parabolic.
However, we stray beyond off-the-shelf theory when the solution, and thus the diffusivity, is unbounded.
We therefore justify this construction method in our setting.
\begin{lemma}
  \label{lem:sandwich}
  Let $\ubar{u} \leq \bar{u}$ be classical sub- and supersolutions of \eqref{eq:main}, respectively, on a time interval $[0, T]$.
  Suppose $\inf \ubar{u} > 0$ and $u_0 \in \m{C}_\loc^1(\R)$ satisfies
  \begin{equation*}
    \ubar{u}(0, \anon) \leq u_0 \leq \bar{u}(0, \anon).
  \end{equation*}
  Then there exists a classical solution $u$ of \eqref{eq:main} on $[0, T]$ satisfying $\ubar{u} \leq u \leq \bar{u}$.
\end{lemma}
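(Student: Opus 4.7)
My plan is to construct $u$ by spatial truncation followed by a compactness argument. The key observation is that on any bounded spatial domain where the solution remains sandwiched between $\ubar{u}$ and $\bar{u}$, the equation is uniformly parabolic with smooth data, so off-the-shelf classical existence and comparison apply. Fix $N \in \mathbb{N}$ and consider the initial-boundary value problem for \eqref{eq:main} on the cylinder $Q_N \coloneqq (0, T) \times (-N, N)$, with initial data $u_0|_{[-N, N]}$ and lateral boundary data $u^N(t, \pm N) \coloneqq \bar{u}(t, \pm N)$, suitably smoothed near the corners $(0, \pm N)$ to ensure $\m{C}^{2,1}$ compatibility while preserving $\ubar{u} \leq u^N \leq \bar{u}$ on the parabolic boundary.

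Next, to apply off-the-shelf quasilinear theory I would modify the nonlinearity outside the range $[\tfrac{1}{2}\inf\ubar{u}, \,2M_N]$, where $M_N \coloneqq \sup_{Q_N}\bar{u} < \infty$, to be smooth, bounded, and uniformly positive. This makes the modified equation uniformly parabolic with smooth coefficients on $Q_N$, so Theorem~V.6.1 of~\cite{LSU68} (applied in the divergence form \eqref{eq:div-form}) yields a classical solution $u^N$ of the modified problem. Since both $\ubar{u}$ and $\bar{u}$ remain sub- and supersolutions of the modified equation in the unmodified range, the classical comparison principle for uniformly parabolic equations (e.g., Corollary~2.5 of~\cite{L96}) applied to the pairs $(\ubar{u}, u^N)$ and $(u^N, \bar{u})$—whose parabolic boundary orderings hold by construction—yields $\ubar{u} \leq u^N \leq \bar{u}$ on $\overline{Q_N}$. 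In particular, $u^N$ takes values in the range where the modification was trivial, so $u^N$ solves the original equation \eqref{eq:main} classically on $Q_N$.

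Finally, I would pass to the limit $N \to \infty$. On each compact set $K \subset [0, T] \times \R$, we have $K \subset Q_N$ for all sufficiently large $N$, and on $K$ the solution $u^N$ is bounded above by $\sup_K \bar{u} < \infty$ and below by $\inf_K \ubar{u} > 0$. Hence \eqref{eq:main} is uniformly parabolic on $K$ with smooth, uniformly bounded coefficients, and interior parabolic Schauder estimates (e.g., Theorem~4.9 of~\cite{L96}) yield uniform $\m{C}^{2+\alpha, 1+\alpha/2}$ bounds on slightly smaller compact sets. A diagonal Arzelà--Ascoli extraction produces a subsequential limit $u$ that solves \eqref{eq:main} classically on $[0, T] \times \R$, satisfies $\ubar{u} \leq u \leq \bar{u}$, and takes the initial data $u_0$ (as the boundary-data modifications near the corners $(0, \pm N)$ become invisible on any fixed compact set as $N \to \infty$).

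The main delicate point is arranging the boundary and initial data at the truncation so that the quasilinear existence theorem applies cleanly while preserving the sandwich at the parabolic boundary. This is a routine—but fiddly—smoothing argument near the corners $(0, \pm N)$, relying on the freedom to choose any data in the nonempty interval $[\ubar{u}, \bar{u}]$. Once this setup is handled, the remainder of the argument is a standard combination of classical comparison and interior Schauder regularity.
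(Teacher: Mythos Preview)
Your proposal is correct and follows essentially the same approach as the paper: solve the equation on a growing sequence of bounded cylinders with boundary data sandwiched between $\ubar{u}$ and $\bar{u}$, use the classical comparison principle on bounded domains to maintain the sandwich, and extract a subsequential limit via interior Schauder estimates. The only cosmetic differences are that the paper chooses an arbitrary continuous $\psi_\pm^M$ in $[\ubar{u},\bar{u}]$ with $\psi_\pm^M(0)=u_0(\pm M)$ and then mollifies the initial--boundary data (removing the mollification afterward via Schauder), whereas you fix the lateral data as $\bar{u}$ and smooth near the corners, and you explicitly truncate the nonlinearity while the paper leaves this implicit; also, to recover $u(0,\anon)=u_0$ the paper invokes an up-to-initial-time H\"older estimate (Theorem~11.3 of~\cite{L86}) using $u_0\in\m{C}_\loc^1$, which you should likewise cite since purely interior Schauder bounds do not by themselves control the limit at $t=0$.
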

\begin{proof}
  Fix $T > 0$.
  Given $M > 0$, take some continuous function $\psi_\pm^M \colon [0, T] \to \R_+$ such that
  \begin{equation*}
    \psi_\pm^M(0) = u_0(\pm M) \And \ubar{u}(\anon, \pm M) \leq \psi_\pm^M \leq \bar{u}(\anon, \pm M).
  \end{equation*}
  We study the following initial-boundary value problem in $(0, T) \times (-M, M)$:
  \begin{equation}
    \label{eq:box}
    \begin{cases}
      \partial_t u^M = \frac{1}{2} u^M \partial_x^2 u^M,\\
      u^M(\anon, \pm M) = \psi_\pm^M,\\
      u^M(0, \anon) = u_0.
    \end{cases}
  \end{equation}
  Because $\inf \ubar{u} > 0,$ this problem is uniformly parabolic.
  Hence if we mollify the initial-boundary data, Theorem~V.6.1 of~\cite{LSU68} states that \eqref{eq:box} admits a unique smooth solution.
  Then interior Schauder estimates such as Theorems~4.8 and 4.9 of~\cite{L96} allow us to remove the mollification and obtain a classical solution of \eqref{eq:box} that is smooth in the interior.
  Because $u_0 \in C_\loc^1$, Theorem~11.3 in~\cite{L86} implies that $u^M$ is H\"older continuous away from $x = \pm M$.

  Now, the classical comparison principle on bounded domains (e.g., Corollary~2.5 of~\cite{L96}) implies that
  \begin{equation}
    \label{eq:box-bd}
    \ubar{u} \leq u^M \leq \bar{u}.
  \end{equation}
  In particular, the sequence $(u^M)_M$ is locally bounded independent of $M$.
  Therefore the parabolic Schauder estimates mentioned above allow us to extract a locally uniform limit $u$ of $(u^M)_M$ as $M \to \infty$.
  This limit $u$ is locally H\"older regular, smooth when $t > 0$, and satisfies \eqref{eq:main}.
  By \eqref{eq:box-bd}, $\ubar{u} \leq u \leq \bar{u}$.
\end{proof}
\noindent
Note that we do not claim that $u$ is unique, although this does hold in our applications; see the three-part proof of Theorem~\ref{thm:root-Lip} below.

\subsection{Change of variables}
\label{sec:CoV}
To make use of the solution $u$ constructed in Lemma~\ref{lem:sandwich}, we need to show that it is strong (under suitable hypotheses on $\ubar{u}$ and $\bar{u}$).
We accomplish this via a change of variables that makes \eqref{eq:main} uniformly parabolic.
On each interval $I$, we will define $\zeta \colon \R \to I$ and
\begin{equation}
  \label{eq:transform}
  v(t, y) \coloneqq \zeta'(y)^{-2} u\big(t, \zeta(y)\big).
\end{equation}
In a minor abuse of notation, we use $\circ$ to denote composition in the spatial coordinate, so that $u\big(t, \zeta(y)\big) = (u \circ \zeta)(t, y)$.
Then we can compute
\begin{align}
  u \circ \zeta &= (\zeta')^2 v,\label{eq:CoV0}\\
  (\partial_x u) \circ \zeta &= \zeta' \partial_y v + 2 \zeta'' v,\label{eq:CoV1}\\
  (\partial_x^2 u) \circ \zeta &= \partial_y^2 v + \frac{3\zeta''}{\zeta'} \partial_y v + \frac{2\zeta'''}{\zeta'}v.\label{eq:CoV2}
\end{align}
Differentiating \eqref{eq:transform} in time and using \eqref{eq:main} and \eqref{eq:CoV2}, we obtain
\begin{equation}
  \label{eq:transform-ev}
  \partial_t v = \frac{1}{2}v \partial_y^2 v + \frac{3\zeta''}{2 \zeta'} v\partial_y v + \frac{\zeta'''}{\zeta'} v^2.
\end{equation}
We will choose $\zeta$ depending on $I$ and $\gamma$ so that Theorem~\ref{thm:root-Lip} reduces to showing that $v \asymp 1,$ $\partial_y v \lesssim 1$, and $\partial_y^2 v \in L_t^1 L_x^\infty$.
In each case, we use the structure of $\ubar{u}$ and $\bar{u}$ in Lemma~\ref{lem:sandwich} to prove that $v \asymp 1$.
Once this is accomplished, \eqref{eq:transform-ev} is uniformly parabolic, which is a distinct advantage over \eqref{eq:main}.
This allows us to deploy parabolic estimates to control $\partial_y v$ and $\partial_y^2 v$.

\subsection{Parabolic estimates}
We state these estimates in a more abstract form.
Consider the quasilinear parabolic equation
\begin{equation}
  \label{eq:transformed}
  \partial_t v = a(v) \partial_y^2 v + b(v, y) \partial_y v + c(v, y), \quad v(0, \anon) = v_0.
\end{equation}
We work under the following assumptions on the coefficients.
\begin{customhyp}{A}
  \label{hyp:transformed}
  The functions $a \colon \R_+ \to \R_+$ and $b,c \colon \R_+ \times \R \to \R$ are smooth.
  Moreover, $b, c$ and all their derivatives are uniformly bounded on $\m{K} \times \R$ for every compact set $\m{K} \subset \R_+.$
\end{customhyp}
We show that \eqref{eq:transformed} propagates Lipschitz regularity.
\begin{proposition}
  \label{prop:transformed}
  Under Hypothesis~\textup{\ref{hyp:transformed}}, suppose $v$ is a classical solution of \eqref{eq:transformed} on $[0, T]$ such that $\op{Lip} v_0 \leq K$ and $K^{-1} \leq v \leq K$ for some $K \in [1, \infty)$.
  Then there exists $C(K, a, b, c) \in (0, \infty)$ such that
  \begin{equation}
    \label{eq:derivs-time}
    |\partial_y v| \leq C \And |\partial_y^2 v| \leq C t^{-1/2} \quad \text{on } (0, T] \times \R.
  \end{equation}
\end{proposition}
\begin{proof}
  The diffusivity $a$ is bounded away from $0$ and $\infty$ on compact subsets of $\R_+$.
  By hypothesis, $v \asymp 1$, so \eqref{eq:transformed} is uniformly parabolic with bounded coefficients.
  Because $v_0$ is Lipschitz, Krylov--Safonov estimates up to the boundary imply that $v$ is $\beta$-H\"older on $[0, T] \times \R$ for some $\beta \in (0, 1)$; see, for instance, Corollary~7.45 in~\cite{L96}.
  We now upgrade this to Lipschitz regularity.

  By a routine approximation argument, we can assume $v$ is smooth and prove suitable estimates.
  Let $w$ solve $\partial_t w = a(v, y) \partial_y^2 w$ with $w(0, \anon) = v_0$ and $z$ solve $\partial_t z = a(v, y) \partial_y^2 z + b(v, y) \partial_y v + c(v, y)$ with $z(0, \anon) = 0$.
  Then $v = w + z$, and by the maximum principle $\norm{w}_{L^\infty} \leq K$ and $\norm{z}_{L^\infty} \leq 2K$.
  Differentiating the $w$-equation, we see that $\partial_y w$ satisfies the divergence-form equation
  \begin{equation*}
    \partial_t \partial_y w = \partial_y[a(v, y) \partial_y \partial_y w].
  \end{equation*}
  It follows from the maximum principle that $\norms{\partial_y w}_{L^\infty} \leq \norms{\partial_y v_0}_{L^\infty} \leq K$.
  For $z$, a local $W^{2,p}$ estimate such as Theorem 7.30 of~\cite{L86} yields
  \begin{equation*}
    \norm{z}_{W^{2,p}(Q_{1/2}(X))} \lesssim \norms{\partial_y v}_{L^p(Q_1(X))} + 1
  \end{equation*}
  for any $X\in\R^2$ and $p \in (1, \infty)$, where $Q_r(X)$ denotes a ball with center $X$ and radius $r$ intersected with $[0, T] \times \R$.
  By Morrey's inequality, we see that for any $\alpha\in (0,1)$,
  \begin{equation*}
    \norm{z}_{\m{C}_y^{1,\al}} \leq A(\norms{\partial_y v}_{L^\infty} + 1)
  \end{equation*}
  for some $A=A(K, a, b, c, \alpha) \in (0, \infty)$.
  Interpolating between the $L^\infty$ and $\m{C}^{1,\alpha}_y$ norms, for any $\eps>0$ there exists $B = B(\eps,K)\in(0,\infty)$ such that
  \begin{equation*}
    \norm{z}_{\m{C}_y^{0,1}} \leq \eps \norm{z}_{\m{C}_y^{1,\al}} + B(\eps, K) \leq A \eps \norms{\partial_y v}_{L^\infty} + A\eps + B.
  \end{equation*}
  Choosing $\eps = 1/(2A)$, we find
  \begin{equation*}
    \norms{\partial_y z}_{L^\infty} \leq \frac{1}{2} \norms{\partial_y v}_{L^\infty} + \frac{1}{2} + B.
  \end{equation*}
  Noting that $\abss{\partial_y v} \leq \abss{\partial_y w} + \abss{\partial_y z}$ and rearranging, we see that $\norms{\partial_y v}_{L^\infty} \leq K + B + 1$.
  It follows that $r \coloneqq \partial_y v$ is bounded.
  Differentiating \eqref{eq:transformed}, $r$ solves the equation
  \begin{equation*}
    \partial_t r = \partial_y(a \partial_yr)  + b \partial_y r + [(\partial_v b)r  + \partial_v c + \partial_y b] r + \partial_y c
  \end{equation*}
  on $(0, T] \times \R$.
  Thus weighted interior Schauder estimates such as \cite[Theorem~4.8]{L96} ensure that $\abss{\partial_y^2 v}  = \abss{\partial_y r} \leq C t^{-1/2}$ on $(0, T] \times \R$.
\end{proof}
Recall that we intend to transform \eqref{eq:main} into an equation of the form \eqref{eq:transformed} via a change of variables depending on the interval $I$.
With estimates on \eqref{eq:transformed} in hand, we now treat each interval $I$ in Hypothesis~\ref{hyp:init} separately.

\subsection{The bounded interval}

We begin with the bounded interval $I = (0, L)$ for some $L > 0$.
To construct our solution, we employ the method of vanishing viscosity (as in the proof of Proposition~\ref{prop:local}).
Given $\eps > 0$, let $u^\eps$ solve \eqref{eq:main} with $u^\eps(0, \anon) = u_0 + \eps \eqqcolon u_0^\eps$.
This Cauchy problem is well-behaved because \eqref{eq:main} is then uniformly parabolic.
In particular, there is a unique bounded global-in-time weak solution and it is classical.
By the comparison principle, the sequence $(u^\eps)_{\eps > 0}$ is increasing in $\eps$.
It follows that there is a nonnegative limit $u \coloneqq \lim_{\eps \searrow 0} u^\eps$.
We show that $u$ is the desired strong solution of \eqref{eq:main}.

We first establish quadratic behavior in the vicinity of $\partial I$.
In the following, recall that $d$ denotes the distance to $I^\cc$, so in particular $d(x)=0$ whenever $x\not\in I$.
\begin{lemma}
  \label{lem:int}
  Suppose $u_0$ satisfies \textup{Hypothesis~\ref{hyp:init}}, and in particular \eqref{eq:hyp-int}.
  Then there is a unique uniformly bounded, uniformly positive solution $u^\eps$ of \eqref{eq:main} with initial data $u_0^\eps$.
  Moreover, the sequence $(u^\eps)_\eps$ is increasing in $\eps$ and the limit $u = \lim_{\eps \searrow 0} u^\eps$ satisfies
  \begin{equation}
    \label{eq:int}
    \frac{d(x)^2}{2(\kappa^{-1} + K)} \leq u(t, x) \leq \frac{d(x)^2}{\kappa^{-1} - t} \ForAll (t, x) \in [0, \kappa^{-1}) \times \R.
  \end{equation}
\end{lemma}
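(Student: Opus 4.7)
Hypothesis~\ref{hyp:init}(a) forces $u_0 \leq \kappa d(x)^2 \leq \kappa L^2/4$ on $I$ and $u_0 \equiv 0$ outside, so $\eps \leq u_0^\eps \leq \kappa L^2/4 + \eps$.  The constants $\ubar{u} \equiv \eps$ and $\bar{u} \equiv \kappa L^2/4 + \eps$ are trivial classical sub-/supersolutions of \eqref{eq:main}, so I would apply Lemma~\ref{lem:sandwich} to produce a classical solution $u^\eps$ with $\eps \leq u^\eps \leq \kappa L^2/4 + \eps$.  Uniqueness follows from Proposition~\ref{prop:comparison}; applying the same comparison to $(u^{\eps_1}, u^{\eps_2})$ with $\eps_1 < \eps_2$ yields monotonicity in $\eps$, so the pointwise limit $u = \lim_{\eps \searrow 0} u^\eps$ exists.

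\textbf{Upper bound.}  I would use the one-parameter family of explicit classical solutions
\[
  \bar{u}_{x_0}^\eps(t, x) \coloneqq \frac{(x - x_0)^2 + \kappa^{-1}\eps}{\kappa^{-1} - t}, \qquad x_0 \in \R,
\]
verified to solve \eqref{eq:main} on $[0, \kappa^{-1}) \times \R$ by direct computation.  For $x_0 \in I^\cc$ we have $(x-x_0)^2 \geq d(x)^2$, so $\bar{u}_{x_0}^\eps(0, \anon) \geq \kappa d(x)^2 + \eps \geq u_0^\eps$, and Proposition~\ref{prop:comparison} yields $u^\eps \leq \bar{u}_{x_0}^\eps$.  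Minimizing over $x_0 \in I^\cc$ and letting $\eps \searrow 0$ produces the upper bound in \eqref{eq:int}; specializing $x_0 = x$ for $x \in I^\cc$ forces $u \equiv 0$ off $I$ in the limit.

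\textbf{Lower bound.}  I would use the mirrored ansatz $\ubar{u}(t, x) = f(x)/(K + t)$, which by direct computation is a classical subsolution of \eqref{eq:main} whenever $f \geq 0$ and $f'' \geq -2$ a.e. — note that only semi-convexity with modulus $2$, not full convexity, is required.  The naive choice $f = d^2$ fails because the downward cusp of $d^2$ at $x = L/2$ carries a $-2L \delta_{L/2}$ distributional spike that violates $f'' \geq -2$.  Instead I would take the parabolic cap tangent to $d^2$ at $x = L/4$ and $x = 3L/4$:
\[
  f^*(x) \coloneqq \begin{cases}
    d(x)^2 & \text{if } x \in [0, L/4] \cup [3L/4, L],\\
    \tfrac{L^2}{8} - (x - L/2)^2 & \text{if } x \in [L/4, 3L/4],\\
    0 & \text{if } x \notin [0, L].
  \end{cases}
\]
This $f^*$ is $\m{C}^1$ on $\R$ with $f^{*\prime\prime} \in \{-2, 2\}$ almost everywhere (in particular in $L^\infty$), and an explicit check gives $\tfrac{1}{2} d(x)^2 \leq f^*(x) \leq d(x)^2$ everywhere, with equality $f^*(L/2) = d(L/2)^2/2$.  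Since $f^* \leq d^2 \leq K u_0$, the initial condition $\ubar{u}(0, \anon) = f^*/K \leq u_0 \leq u_0^\eps$ holds, so Proposition~\ref{prop:comparison} (whose hypotheses are easily verified) yields $u^\eps \geq \ubar{u}$.  Sending $\eps \searrow 0$ and using $t < \kappa^{-1}$ gives
\[
  u(t, x) \geq \frac{f^*(x)}{K + t} \geq \frac{d(x)^2}{2(K + \kappa^{-1})}.
\]
The main technical subtlety is the choice of $f^*$: the factor $2$ in the lemma's lower bound arises geometrically from the maximal semi-convex minorant of $d^2$, which by the tangency conditions at $x = L/4$ and $3L/4$ is forced to satisfy $f^*(L/2) = d^2(L/2)/2$.
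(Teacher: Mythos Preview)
Your proof is correct and takes essentially the same approach as the paper: quadratic explicit solutions centered at the boundary points of $I$ for the upper barrier, and a piecewise-quadratic $\m{C}^1$ profile for the lower barrier (your $f^*$; the paper's version is $G = 2f^*$, described only as a piecewise quadratic with $d^2 \le G \le 2d^2$ and $|G''| = 4$, scaled by $\tfrac{1}{2(t+K)}$ instead of $\tfrac{1}{t+K}$). The only cosmetic differences are that the paper appeals to the reflection $x \mapsto L - x$ rather than minimizing over $x_0 \in I^\cc$, and feeds the quadratic barriers directly into Lemma~\ref{lem:sandwich} (obtaining existence and the sandwich simultaneously) rather than first producing $u^\eps$ from constant barriers and then invoking Proposition~\ref{prop:comparison}.
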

\begin{proof}
  Define the nonlinear heat operator
  \begin{equation}
    \label{eq:heat-op}
    \op{NH}[v] \coloneqq \partial_t v - \frac{1}{2} v \partial_x^2 v.
  \end{equation}
  Then $v \in \m{C}_t^1W_x^{2,\infty}$ is a supersolution of \eqref{eq:main} when $\op{NH}[v] \geq 0$ and a subsolution when $\op{NH}[v] \leq 0$.
  Define
  \begin{equation*}
    \bar{u}^\eps(t, x) \coloneqq \frac{x^2 + \kappa^{-1} \eps}{\kappa^{-1} - t}.
  \end{equation*}
  Then $\op{NH}[\bar{u}^\eps] = 0$, so $\bar{u}^\eps$ solves \eqref{eq:main} when $t < \kappa^{-1}$.
  By \eqref{eq:hyp-int}, $\bar{u}^\eps(0, \anon) \geq u_0^\eps$.

  For the subsolution, take $G \in W^{2,\infty}(I)$ such that $d^2 \leq G \leq 2d^2$.
  Using a piecewise quadratic function, we can arrange $\abs{G''} = 4$.
  Extend $G$ by $0$ to $\R$.
  By \eqref{eq:hyp-int}, $u_0 \geq G/(2K)$.
  We therefore define
  \begin{equation*}
    \ubar{u}^\eps(t, x) \coloneqq \frac{G(x) + \eps}{2(t + K)}.
  \end{equation*}
  Then $\op{NH}[\ubar{u}^\eps] \leq 0$ and $\ubar{u}^\eps(0, \anon) \leq u_0^\eps$.

  It now follows from Lemma~\ref{lem:sandwich} that there exists a solution $u^\eps$ of \eqref{eq:main} with initial data $u_0^\eps$ satisfying
  \begin{equation}
    \label{eq:sandwich}
    \ubar{u}^\eps \leq u^\eps \leq \bar{u}^\eps.
  \end{equation}
  In this uniformly parabolic setting, Schauder estimates imply that $u^\eps$ is a strong solution and thus unique by Theorem~\ref{thm:unique}.
  Moreover, the comparison principle (Proposition~\ref{prop:comparison}) ensures that the sequence $(u^\eps)_\eps$ is increasing in $\eps$, so the limit $u = \lim_{\eps \searrow 0} u^\eps \geq 0$ exists.
  Taking $\eps \searrow 0$ in \eqref{eq:sandwich}, we see that
  \begin{equation*}
    \frac{d(x)^2}{2(t + K)} \leq u(t, x) \leq \frac{x^2}{\kappa^{-1} - t} \quad \text{on } [0, \kappa^{-1}) \times I.
  \end{equation*}
  Using symmetry under the map $x \mapsto L - x$, we obtain \eqref{eq:int}.
\end{proof}
This control on $u$ suffices to prove our main result on the interval.
\begin{proof}[Proof of Theorem~\textup{\ref{thm:root-Lip}} on the interval]
  Let $(u^\eps)_\eps$ be the sequence of solutions from Lemma~\ref{lem:int}.
  In \cite{DPL87}, the authors show that $u = \lim_{\eps \searrow 0} u^\eps$ is a weak solution of \eqref{eq:main}.
  We show that it is strong and root-Lipschitz on the time interval $[0, \kappa^{-1})$.

  Define $\zeta \colon \R \to I$ by $\zeta(y) \coloneqq \tfrac{L}{2}(\tanh y+1)$ and consider $v$ given by \eqref{eq:transform}, so $v = (\zeta')^{-2} u \circ \zeta$.
  This satisfies \eqref{eq:transform-ev}, which we recall for the reader's convenience:
  \begin{equation}
    \label{eq:transform-ev-int}
    \partial_t v = \frac{1}{2} v \partial_y^2 v + \frac{3\zeta''}{2\zeta'} \partial_y v + \frac{\zeta'''}{\zeta'} v^2.
  \end{equation}
  For this choice of $\zeta$, the coefficients $\zeta''/\zeta'$ and $\zeta'''/\zeta'$ in \eqref{eq:transform-ev-int} are uniformly smooth on $\R$.
  It follows that the equation \eqref{eq:transform-ev-int} satisfies Hypothesis~\ref{hyp:transformed}.
  Now, one can check that the distance $d$ to $I^\cc$ satisfies
  \begin{equation*}
    d \circ \zeta \asymp \zeta'.
  \end{equation*}
  Thus the definition \eqref{eq:transform} of $v$ implies that
  \begin{equation}
    \label{eq:int-equiv0}
    u \asymp d^2 \iff v \asymp 1.
  \end{equation}
  Assuming these equivalent conditions hold, \eqref{eq:CoV1} yields
  \begin{equation}
    \label{eq:int-equiv1}
    |\partial_x u| \lesssim d \iff |\partial_y v| \lesssim 1.
  \end{equation}
  Again assuming this and \eqref{eq:int-equiv0}, \eqref{eq:CoV2} implies that
  \begin{equation}
    \label{eq:int-equiv2}
    |\partial_x^2 u| \lesssim 1 \iff |\partial_y^2 v| \lesssim 1.
  \end{equation}
  Through these relations, we transform our investigation of $u$ into one of $v$.

  Fix $T \in (0, \kappa^{-1})$.
  By \eqref{eq:int-equiv0}, Lemma~\ref{lem:int} implies that $v \asymp 1$ on $[0, T] \times \R$.
  Now recall that Hypothesis~\ref{hyp:init} includes the assumption  $\op{Lip}\sqrt{u_0} < \infty$.
  Using the identity
  \begin{equation}
    \label{eq:sqrt-deriv}
    \partial_x \sqrt{u} = \frac{\partial_x u}{2 \sqrt{u}}
  \end{equation}
  and \eqref{eq:hyp-int}, we see that $|\partial_x u_0| \lesssim \sqrt{u_0} \lesssim d$.
  Thus \eqref{eq:int-equiv1} yields $|\partial_y v_0| \lesssim 1$, and $v$ satisfies the hypotheses of Proposition~\ref{prop:transformed}.
  It follows that $|\partial_y v| \lesssim 1$ and $|\partial_y^2 v| \lesssim t^{-1/2}$ on $(0, T] \times \R$.
  Using \eqref{eq:int-equiv1} and \eqref{eq:int-equiv2}, these estimates become
  \begin{equation}
    \label{eq:int-deriv-bd}
    |\partial_x u| \lesssim d \And |\partial_x^2 u| \lesssim t^{-1/2} \quad \text{on } (0, T] \times I.
  \end{equation}
  In particular, because $t^{-1/2}$ is integrable in time, $\partial_x^2 u \in L_t^1 L_x^\infty$ and $u$ is a strong solution of \eqref{eq:main} on $[0, \kappa^{-1})$.
  
  It only remains to check that $u$ is root-Lipschitz.
  By \eqref{eq:int}, $u \gtrsim d^2$ on $[0, T] \times I$.
  On the other hand, \eqref{eq:int-deriv-bd} states that $|\partial_x u| \lesssim d$ there.
  Hence \eqref{eq:sqrt-deriv} implies that $\partial_x \sqrt{u}$ is uniformly bounded on $[0, T] \times I$.
\end{proof}

\subsection{The whole line}

Next, we turn to solutions on the whole line.
In the previous subsection, we needed to carefully control the behavior of $u$ near $\partial I$.
There is no such boundary on the whole line, but instead we must worry about the behavior of $u$ at infinity.

A simple calculation shows that $x^2/(T - t)$ is a strong solution of \eqref{eq:main} that blows up at time $T > 0$.
This blow-up is due to large diffusion moving mass in from spatial infinity in finite time.
The situation would be even worse if $u_0$ grew faster than $x^2$, so we always assume that $u_0 \lesssim \braket{x}^2$.
In the other direction, for simplicity we assume that $u_0$ is uniformly positive.
Then \eqref{eq:main} is uniformly parabolic from below and the regularization $u^\eps$ is unnecessary.

Within the class of quadratically-bounded uniformly positive functions, we focus on initial conditions with fixed power-law growth at infinity.
That is, $u_0$ is comparable to $\braket{x}^\gamma$ for some $\gamma \in [0, 2]$.
One could also consider data with distinct growth exponents at $\pm \infty$; this would require only minor modifications to our arguments.

To begin, we show that there exists a solution with the expected growth.
\begin{lemma}
  \label{lem:line}
  Suppose $u_0$ satisfies Hypothesis~\textup{\ref{hyp:init}}, and in particular \eqref{eq:hyp-line}.
  If $\gamma = 2$, there exists a classical solution $u$ of \eqref{eq:main} on the time interval $[0, \kappa^{-1})$ satisfying
  \begin{equation}
    \label{eq:line-quad}
    K^{-1} \braket{x}^2 \leq u(t, x) \leq \frac{x^2 + \kappa^{-1}K}{\kappa^{-1} - t}.
  \end{equation}
  If $\gamma \in [0, 2)$, then there exist $D(\gamma, K) > 0$ and a classical solution $u$ of \eqref{eq:main} on the time interval $[0, \infty)$ such that
  \begin{equation}
    \label{eq:line-subquad}
    \frac{\braket{x}^\gamma}{t + K} \leq u(t, x) \leq 2K \braket{x}^\gamma + D \e^{3Kt}.
  \end{equation}
\end{lemma}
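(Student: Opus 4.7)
The plan is to construct explicit sub- and supersolutions of \eqref{eq:main} bracketing $u_0$ and then invoke Lemma~\ref{lem:sandwich} to obtain the classical solution $u$. Because $u_0\ge K^{-1}\braket{x}^\gamma\ge K^{-1}>0$ is already uniformly positive, the $\eps$-regularization used in Lemma~\ref{lem:int} is unnecessary; one simply applies the sandwich construction directly. Throughout, abbreviate $\op{NH}[v]\coloneqq\partial_tv-\tfrac12 v\partial_x^2v$ as in the proof of Lemma~\ref{lem:int}.

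For the case $\gamma=2$, take the explicit exact solution $\bar u(t,x)=(x^2+\kappa^{-1}K)/(\kappa^{-1}-t)$ as supersolution, which at $t=0$ equals $\kappa x^2+K\ge u_0$ by \eqref{eq:hyp-line}, and the time-independent $\ubar u(t,x)=K^{-1}\braket{x}^2$ as subsolution, satisfying $\op{NH}[\ubar u]=-K^{-2}\braket{x}^2<0$ and $\ubar u(0,\anon)\le u_0$ by \eqref{eq:hyp-line}. Applying Lemma~\ref{lem:sandwich} on each $[0,T]$ with $T<\kappa^{-1}$, and using Theorem~\ref{thm:unique} to patch, produces a classical solution obeying \eqref{eq:line-quad}.

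For $\gamma\in[0,2)$, direct differentiation gives $\partial_x^2\braket{x}^\gamma=\gamma(1+(\gamma-1)x^2)/(1+x^2)^{2-\gamma/2}$, from which one reads off the two-sided bound $-2\le\partial_x^2\braket{x}^\gamma\le\gamma\le 2$. With the subsolution $\ubar u(t,x)=\braket{x}^\gamma/(t+K)$, a quick computation gives
\[\op{NH}[\ubar u]=-\frac{\braket{x}^\gamma\bigl(2+\partial_x^2\braket{x}^\gamma\bigr)}{2(t+K)^2}\le 0,\]
and $\ubar u(0,\anon)=K^{-1}\braket{x}^\gamma\le u_0$ by \eqref{eq:hyp-line}. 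For the candidate supersolution $\bar u(t,x)=2K\braket{x}^\gamma+De^{3Kt}$, the condition $\op{NH}[\bar u]\ge 0$ expands to
\[(3-\partial_x^2\braket{x}^\gamma)De^{3Kt}\ge 2K\braket{x}^\gamma\partial_x^2\braket{x}^\gamma,\]
whose left side is $\ge De^{3Kt}$ by the upper bound on $\partial_x^2\braket{x}^\gamma$. For $\gamma\in[0,1]$ the right side is uniformly bounded in $x$ (since $\braket{x}^\gamma\partial_x^2\braket{x}^\gamma$ decays at infinity), so choosing $D=D(\gamma,K)$ large enough closes the inequality, and Lemma~\ref{lem:sandwich} applied on each $[0,T]$ and patched via Theorem~\ref{thm:unique} delivers \eqref{eq:line-subquad}.

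The hard part will be the regime $\gamma\in(1,2)$: here $\braket{x}^\gamma\partial_x^2\braket{x}^\gamma$ grows like $\braket{x}^{2\gamma-2}$, so no constant $D$ can pointwise dominate it, and the direct supersolution fails. In this range I expect to route the argument through the change of variables of Section~\ref{sec:CoV}: choose $\zeta\colon\R\to\R$ smooth with $\zeta'\asymp\braket{\zeta}^{\gamma/2}$ so that the transformed unknown $v=(\zeta')^{-2}u\circ\zeta$ satisfies \eqref{eq:transform-ev}, which is uniformly parabolic with smooth bounded coefficients in the sense of Hypothesis~\ref{hyp:transformed}. Combining Lemma~\ref{lem:transformed-local} and Proposition~\ref{prop:transformed} with a maximum-principle propagation of the bound $v\asymp 1$ then yields global existence of $v$ with two-sided bounds, which pull back via $u=(\zeta')^2 v$ to the claimed bound~\eqref{eq:line-subquad}.
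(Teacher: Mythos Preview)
Your treatment of $\gamma=2$ matches the paper exactly, and your simplification for $\gamma\in[0,1]$ (using the constant coefficient $2K$ in place of the paper's time-varying one) is a legitimate shortcut: in that range $\braket{x}^\gamma\partial_x^2\braket{x}^\gamma\le\gamma$ is bounded, so a large enough $D$ suffices.

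The gap is in the range $\gamma\in(1,2)$. Your proposed fix---pass to the $v$-equation \eqref{eq:transform-ev} and invoke Lemma~\ref{lem:transformed-local}, Proposition~\ref{prop:transformed}, and ``a maximum-principle propagation of the bound $v\asymp1$''---does not close. Those results \emph{assume} $v\asymp1$ on the full time interval; they do not produce it. In the paper the logic runs in the opposite direction: Lemma~\ref{lem:line} is proved first by explicit barriers, and only then does the proof of Theorem~\ref{thm:root-Lip} on the line use it to verify $v\asymp1$. Moreover, the transformed equation does not make the upper barrier easier: with $\zeta'=\braket{y}^{\gamma/(2-\gamma)}$ one computes $\zeta'''/\zeta'=p\bigl(1+(p-1)y^2\bigr)\braket{y}^{-4}>0$ for $p=\gamma/(2-\gamma)>1$, so the reaction term $(\zeta'''/\zeta')v^2$ is strictly positive and constants are \emph{not} supersolutions. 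A na\"ive continuation from Lemma~\ref{lem:transformed-local} lets the upper bound on $v$ degrade at each step, and you have given no mechanism to prevent finite-time blow-up of the barrier.

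The paper's device is to keep the argument in $u$-coordinates but make the coefficient of $\braket{x}^\gamma$ slowly increasing: take
\[
\bar u_\gamma(t,x)=K a(t)\braket{x}^\gamma+b(t),\qquad a(t)=2-\frac{1}{t+1},\quad b(t)=De^{3Kt}.
\]
Then $\op{NH}[\bar u_\gamma]$ contains the new positive term $K\dot a\,\braket{x}^\gamma$, and since $\gamma<2$ the difference $\Lambda\braket{x}^{2\gamma-2}-\braket{x}^\gamma$ is bounded above by $\Lambda^{4/(2-\gamma)}$ uniformly in $x$; taking $\Lambda=4K/\dot a$ absorbs the bad $\braket{x}^{2\gamma-2}$ term into a function of $t$ alone, which $b(t)$ then handles. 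This single barrier covers all $\gamma\in[0,2)$ at once and yields the global-in-time bound \eqref{eq:line-subquad} without any change of variables.
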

\begin{proof}
  First suppose $\gamma = 2$.
  Define
  \begin{equation*}
    \ubar{u}_2(t, x) \coloneqq K^{-1} \braket{x}^2 \And \bar u_2(t, x) \coloneqq \frac{x^2 + \kappa^{-1}K}{\kappa^{-1} - t}
  \end{equation*}
  on $[0, \kappa^{-1}) \times \R$.
  These are sub- and supersolutions of \eqref{eq:main}, respectively.
  By \eqref{eq:hyp-line}, $\ubar{u}_2(0, \anon) \leq u_0 \leq \bar{u}_2(0, \anon)$.
  Thus Lemma~\ref{lem:sandwich} yields a classical solution $u$ of \eqref{eq:main} on the time interval $[0, \kappa^{-1})$ such that $\ubar{u}_2 \leq u \leq \bar{u}_2$.
  This proves \eqref{eq:line-quad}.
  
  Now suppose $\gamma \in [0, 2)$.
  We make use of the following identity:
  \begin{equation}
    \label{eq:Japanese-deriv}
    \partial_x^2 \braket{x}^\gamma = \gamma[(\gamma - 1) x^2 + 1] \braket{x}^{\gamma - 4}.
  \end{equation}
  For the lower bound, we observe that $\partial_x^2 \braket{x}^\gamma \geq -1$.
  It follows that
  \begin{equation*}
    \ubar{u}_\gamma(t, x) \coloneqq \frac{\braket{x}^\gamma}{t + K}
  \end{equation*}
  is a subsolution of \eqref{eq:main}.
  For the supersolution, we define
  \begin{equation*}
    \bar{u}_\gamma(t, x) \coloneqq K a(t) \braket{x}^\gamma + b(t),
  \end{equation*}
  where
  \begin{equation*}
    a(t) \coloneqq 2 - \frac{1}{t + 1}
  \end{equation*}
  and $b \colon \R_+ \to \R_+$ remains to be determined.
  Note that $1 \leq a < 2$.
  Recalling the operator $\op{NH}{}$ from \eqref{eq:heat-op}, we use \eqref{eq:Japanese-deriv} to compute
  \begin{equation}
    \label{eq:gamma-super}
    \op{NH}[\bar{u}_\gamma] \geq K \dot a \braket{x}^\gamma - 4 K^2 \braket{x}^{2\gamma-2} + \dot b - 2Kb.
  \end{equation}
  Now fix $\Lambda \geq 1$.
  We wish to bound $f_\Lambda(x) \coloneqq \Lambda \braket{x}^{2\gamma - 2} - \braket{x}^\gamma$ uniformly from above, which is possible because $\gamma < 2$.
  Writing
  \begin{equation*}
    f_\Lambda(x) = \braket{x}^\gamma(\Lambda \braket{x}^{\gamma - 2} - 1),
  \end{equation*}
  we see that $f_\Lambda(x) \leq 0$ when $\braket{x} \geq \Lambda^{\frac{1}{2 - \gamma}}$.
  Hence
  \begin{equation*}
    f_\Lambda(x) \leq \Lambda \braket{x}^{2\gamma - 2}|_{\braket{x} = \Lambda^{1/(2 - \gamma)}} = \Lambda^{1 + \frac{2\gamma - 2}{2 - \gamma}} \leq \Lambda^{\frac{4}{2 - \gamma}}.
  \end{equation*}
  Applying this to \eqref{eq:gamma-super}, we see that
  \begin{equation*}
    K \dot a \braket{x}^\gamma - 4 K^2 \braket{x}^{2\gamma - 2} = - K \dot a f_{4K/\dot a}(x) \geq - K \left(\frac{4K}{\dot a}\right)^{\frac{4}{2 - \gamma}} = -K\big[4K(t + 1)^2\big]^{\frac{4}{2 - \gamma}}.
  \end{equation*}
  Thus \eqref{eq:gamma-super} yields
  \begin{equation*}
    \op{NL}[\bar{u}_\gamma] \geq \dot b - 2K b - K\big[4K(t + 1)^2\big]^{\frac{4}{2 - \gamma}}.
  \end{equation*}
  Now, there exists $D(K, \gamma) > 0$ such that $D \e^{Kt} \geq K\big[4K(t + 1)^2\big]^{\frac{4}{2 - \gamma}}$ for all $t \geq 0$.
  It follows that if we choose $b(t) \coloneqq D \e^{3K t}$, then we will have $\op{NH}[\bar u_\gamma] \geq 0$.
  That is, $\bar{u}_\gamma$ is a supersolution of \eqref{eq:main}.

  Now, \eqref{eq:hyp-line} implies that $\ubar{u}_\gamma(0, \anon) \leq u_0 \leq \bar{u}_\gamma(0, \anon)$.
  Again, Lemma~\ref{lem:sandwich} yields a solution $u$ trapped between $\ubar{u}$ and $\bar{u}$; \eqref{eq:line-subquad} follows.
\end{proof}
These power-law bounds imply that $u$ is a strong, root-Lipschitz solution.
\begin{proof}[Proof of Theorem~\textup{\ref{thm:root-Lip}} on the line]
  Let $u$ be a solution from Lemma~\ref{lem:line}.
  As a classical solution of \eqref{eq:main}, $u$ is also a weak solution.
  We now control $\partial_x u$ and $\partial_x^2 u$ to show that $u$ is strong and root-Lipschitz.
  We note that this will imply the uniqueness of $u$ in Lemma~\ref{lem:line}.

  We define a change of variables $\zeta$ depending on the parameter $\gamma \in [0, 2]$.
  If $\gamma = 2$, fix $T \in (0, \kappa^{-1})$ and let $\zeta(y) \coloneqq \sinh y$.
  If $\gamma \in [0, 2)$, fix $T \in (0, \infty)$ and define $\zeta \colon \R \to \R$ by $\zeta(y) \coloneqq \int_0^y \braket{z}^{\frac{\gamma}{2 - \gamma}} \d z.$
  
  In each case, we use $\zeta$ in \eqref{eq:transform} to define $v$.
  One can readily check that the ratios $\zeta''/\zeta'$ and $\zeta'''/\zeta'$ are uniformly smooth, so \eqref{eq:transform-ev} satisfies Hypothesis~\ref{hyp:transformed}.
  Moreover,
  \begin{equation*}
    \zeta' \asymp \braket{\zeta}.
  \end{equation*}
  Using \eqref{eq:CoV0}--\eqref{eq:CoV2}, this implies the following equivalences, each of which holds provided either side of the preceding ones does:
  \begin{align}
    u \asymp \braket{x}^\gamma &\iff v \asymp 1,\label{eq:line-equiv0}\\
    |\partial_x u| \lesssim \braket{x}^{\gamma/2} &\iff |\partial_y v| \lesssim 1,\label{eq:line-equiv1}\\
    |\partial_x^2 u| \lesssim 1 &\iff |\partial_y^2 v| \lesssim 1.\label{eq:line-equiv2}
  \end{align}
  By \eqref{eq:line-equiv0}, Lemma~\ref{lem:line} implies that $v \asymp 1$ on $[0, T] \times \R$.
  Moreover, the root-Lipschitz assumption in Hypothesis~\ref{hyp:init} yields $|\partial_x u_0| \lesssim \sqrt{u_0} \lesssim \braket{x}^{\gamma/2}$, so by \eqref{eq:line-equiv1} we have $|\partial_y v_0| \lesssim 1$.
  Thus $v$ satisfies the hypotheses of Proposition~\ref{prop:transformed}, and $|\partial_y v| \lesssim 1$ and $|\partial_y^2 v| \lesssim t^{-1/2}$.
  Using \eqref{eq:line-equiv1} and \eqref{eq:line-equiv2}, these estimates become
  \begin{equation}
    \label{eq:line-deriv-bd}
    |\partial_x u| \lesssim \braket{x}^{\gamma/2} \And |\partial_x^2 u| \lesssim t^{-1/2} \quad \text{on } (0, T] \times \R.
  \end{equation}
  Hence $u$ is a strong solution on the time interval $[0, \kappa^{-1})$ or $[0, \infty)$, depending on~$\gamma$.
  Moreover, \eqref{eq:sqrt-deriv}, Lemma~\ref{lem:line}, and \eqref{eq:line-deriv-bd} imply that $u$ is root-Lipschitz.
\end{proof}

\subsection{The half-line}

Finally, we treat the delicate case $I = \R_+$.
The half-line encompasses both challenges treated in the previous two subsections: vanishing diffusivity near $0$ and potential blow-up at infinity.
Fortunately, we have already done most of the work.

Because \eqref{eq:main} is degenerate at the boundary $x = 0$ of $I$, we again employ the solutions $u^\eps$ with initial data $u_0 + \eps$.
Their limit $u \coloneqq \lim_{\eps \searrow 0} u^\eps$ is our putative solution.
\begin{lemma}
  \label{lem:half}
  Suppose $u_0$ satisfies Hypothesis~\textup{\ref{hyp:init}}, and in particular \eqref{eq:hyp-half}.
  Then there exists a limit $u = \lim_{\eps \searrow 0} u^\eps$ of a discrete sequence of solutions $(u^\eps)_\eps$ such that
  \begin{equation}
    \label{eq:half-upper-quad}
    u(t, x) \leq \frac{x^2}{\kappa^{-1} - t} \quad \text{on } [0, \kappa^{-1}) \times \R_+.
  \end{equation}
  If $\gamma < 2$, we also have
  \begin{equation}
    \label{eq:half-upper-subquad}
    u(t, x) \leq 2K \braket{x}^\gamma + D \e^{3Kt} \quad \text{on } [0, \kappa^{-1}) \times \R_+,
  \end{equation}
  where $D$ is the constant from Lemma~\ref{lem:line}.
  Moreover, for all $\gamma \in [0, 2]$ we have
  \begin{equation}
    \label{eq:half-lower}
    u(t, x) \geq \frac{x^2 \wedge x^\gamma}{2(\kappa^{-1} + K)} \quad \text{on } [0, \kappa^{-1}) \times \R_+.
  \end{equation}
\end{lemma}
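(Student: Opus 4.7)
The plan is to follow the vanishing-viscosity strategy from the interval and line cases, sandwiching each approximate solution $u^\eps$ between an explicit supersolution and an explicit subsolution chosen to capture the half-line geometry, and then passing to the monotone limit as $\eps \searrow 0$. Specifically, for each $\eps > 0$ I will apply Lemma~\ref{lem:sandwich} to produce a classical $u^\eps$ with data $u_0 + \eps$; monotonicity of $u_0 + \eps$ in $\eps$ together with Proposition~\ref{prop:comparison} gives that $(u^\eps)_\eps$ is increasing, so $u \coloneqq \lim_{\eps \searrow 0} u^\eps$ exists, and the three bounds in the lemma are then obtained by passing to the limit in the corresponding barrier inequalities.

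For the two upper bounds, I can reuse supersolutions already constructed in the previous subsections. The explicit solution $\bar u^\eps(t,x) \coloneqq (x^2 + \kappa^{-1}\eps)/(\kappa^{-1}-t)$ from the interval case dominates $u_0 + \eps$ at $t = 0$ thanks to the upper bound $u_0 \leq \kappa x^2$ in (c), and sending $\eps \searrow 0$ yields \eqref{eq:half-upper-quad}. When $\gamma < 2$, the line supersolution $\bar u_\gamma(t,x) = Ka(t)\braket{x}^\gamma + De^{3Kt}$ from Lemma~\ref{lem:line} dominates $u_0 + \eps$ at $t = 0$ (using $u_0 \leq Kx^\gamma \leq K\braket{x}^\gamma$ from (c), after choosing $\eps$ so that $\eps \leq D$); since $a(t) \leq 2$, this yields \eqref{eq:half-upper-subquad} on letting $\eps \searrow 0$.

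The lower bound \eqref{eq:half-lower} requires a subsolution that simultaneously captures the quadratic behavior of $u_0$ near the boundary $x = 0$ and the $x^\gamma$ growth at infinity. Mimicking the interval construction, I take $\ubar u^\eps(t,x) \coloneqq (G(x) + \eps)/[2(t+K)]$, where $G \in W^{2,\infty}(\R)$ is nonnegative, vanishes on $(-\infty, 0]$, obeys the ribbon bound $x^2 \wedge x^\gamma \leq G(x) \leq 2(x^2 \wedge x^\gamma)$ on $\R_+$, and satisfies $G'' \geq -4$ almost everywhere. The same computation as in the interval case then makes $\ubar u^\eps$ a subsolution, and the lower bound $u_0 \geq K^{-1}(x^2 \wedge x^\gamma)$ in (c) gives $G \leq 2Ku_0$, hence $\ubar u^\eps(0,\cdot) \leq u_0 + \eps$. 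Applying Lemma~\ref{lem:sandwich} and then taking $\eps \searrow 0$ in the sandwich produces $u(t,x) \geq G(x)/[2(t+K)] \geq (x^2 \wedge x^\gamma)/[2(\kappa^{-1}+K)]$ for every $t < \kappa^{-1}$.

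The main obstacle is the construction of $G$ when $\gamma < 2$: at $x = 1$ the function $x^2 \wedge x^\gamma$ has a downward slope jump of size $2-\gamma$, so a naive patching would force a negative Dirac mass into $G''$. I plan to absorb this slope deficit by a concave piecewise-quadratic arc starting at $x = 1$ with $G'' = -4$, then switch to $G(x) = x^\gamma$ on $[X, \infty)$ for a suitably chosen $X > 1$; at the matching point $x = X$, any residual slope mismatch is a nonnegative jump and therefore a harmless positive Dirac in $G''$. Here the key numerical fact is that $(x^\gamma)'' \geq -1/4$ on $[1, \infty)$ for every $\gamma \in [0, 2]$, so the $x^\gamma$ tail already satisfies $G'' \geq -4$ on its own. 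A brief mollification then produces a genuinely smooth $G$ with negligible loss, and the parabolic Schauder upgrade to a strong solution, uniqueness via Theorem~\ref{thm:unique}, and the monotone limit $\eps \searrow 0$ all proceed exactly as in the interval case.
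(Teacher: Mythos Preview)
Your barrier choices and the construction of $G$ are essentially the same as the paper's, and the overall vanishing-viscosity strategy is correct. The real discrepancy is in how you extract the monotone limit.

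You assert that Proposition~\ref{prop:comparison} gives $(u^\eps)_\eps$ increasing in $\eps$, and that the ``Schauder upgrade to a strong solution \ldots\ proceeds exactly as in the interval case.'' But in the interval case $u^\eps$ was \emph{bounded}, so \eqref{eq:main} was uniformly parabolic and standard Schauder estimates immediately yielded $\partial_x^2 u^\eps \in L_t^1 L_x^\infty$. On the half-line $u^\eps$ is unbounded (it grows like $x^\gamma$ or $x^2$ at infinity), so that argument does not carry over verbatim; obtaining $\partial_x^2 u^\eps \in L_t^1 L_x^\infty$ globally would require the change-of-variables machinery that is only developed \emph{after} this lemma. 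Without strongness of $u^\eps$ you cannot invoke Proposition~\ref{prop:comparison}, and without uniqueness you cannot even be sure that applying Lemma~\ref{lem:sandwich} separately with $\bar u_2^\eps$ and with $\bar u_\gamma$ yields the \emph{same} $u^\eps$.

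The paper explicitly flags this issue and sidesteps it: rather than proving uniqueness of $u^\eps$, it takes the discrete sequence $\eps_n = 1/n$ and, when constructing $u^{\eps_n}$, uses $\bar u^{\eps_n} \wedge u^{\eps_{n-1}}$ as the supersolution in Lemma~\ref{lem:sandwich}. This forces $u^{\eps_n} \le u^{\eps_{n-1}}$ by construction, so the pointwise limit exists without any appeal to comparison or uniqueness at the $\eps$-level. The paper also remarks that your route---proving uniqueness via a change of variables plus comparison---does work, but it is not ``exactly as in the interval case'' and requires nontrivial additional effort that you have not supplied.
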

\begin{proof}
  Drawing on the proof of Lemma~\ref{lem:int}, let
  \begin{equation*}
    \bar{u}_2^\eps(t, x) \coloneqq \frac{x^2 + \kappa^{-1} \eps}{\kappa^{-1} - t},
  \end{equation*}
  which is a supersolution of \eqref{eq:main}.
  Recalling $\bar{u}_\gamma$ from the proof of Lemma~\ref{lem:line}, their minimum $\bar{u}^\eps \coloneqq \bar{u}_2^\eps \wedge \bar{u}_\gamma$ is also a (generalized) supersolution of \eqref{eq:main}.
  Moreover, $\bar{u}^\eps(0, \anon) \geq u_0 + \eps$ by \eqref{eq:hyp-half}.

  For the lower bound, take $G \in W_{\loc}^{2,\infty}\big([0, \infty)\big)$ such that $G'' \in L^\infty(\R_+)$ and
  \begin{equation*}
    x^2 \wedge x^\gamma \leq G(x) \leq 2 (x^2 \wedge x^\gamma).
  \end{equation*}
  Using piecewise quadratics, we can arrange $\abs{G''} \leq 4$.
  By \eqref{eq:hyp-half}, $u_0 \geq G/(2K)$.
  We define
  \begin{equation*}
    \ubar{u}^\eps(t, x) \coloneqq \frac{G(x) + \eps}{2(t + K)}.
  \end{equation*}
  Then $\ubar{u}^\eps$ is a subsolution of \eqref{eq:main} satisfying $\ubar{u}^\eps(0, \anon) \leq u_0 + \eps$.

  For each $\eps > 0$, we have constructed a pair of sub- and supersolutions $\ubar{u}^\eps$ and $\bar{u}^\eps$ such that $\ubar{u}^\eps(0, \anon) \leq u_0 + \eps \leq \bar{u}^\eps(0, \anon)$.
  Using Lemma~\ref{lem:sandwich}, we obtain a solution $u^\eps$ of \eqref{eq:main} with initial data $u_0 + \eps$ such that $\ubar{u}^\eps \leq u^\eps \leq \bar{u}^\eps$.
  We have not shown the uniqueness of $u^\eps$, so we do not know that the sequence $(u^\eps)_{\eps}$ is increasing in $\eps$.
  In fact, one can use a suitable change of variables and the comparison principle to prove both.
  We do not require these facts, so we use a simpler strategy here.

  Let $\eps_n \coloneqq \tfrac{1}{n}$.
  We construct a solution $u^{\eps_1}$ as above.
  For $n > 1$, we use $\bar{u}^{\eps_n} \wedge u^{\eps_{n-1}}$ as a supersolution in the construction of $u^{\eps_n}$.
  Thus $u^{\eps_n} \leq u^{\eps_{n-1}}$ for all $n > 1$.
  In this manner, we construct a sequence of solutions $u^{\eps_n}$ with initial data $u_0 + \eps_n$ that is decreasing in $n$.
  This allows us to extract a pointwise limit $u \coloneqq \lim_{n \to \infty} u^{\eps_n}$.

  By construction,
  \begin{equation*}
    \lim_{\eps \searrow 0} \ubar{u}^\eps \leq u \leq \lim_{\eps \searrow 0} \bar{u}^\eps.
  \end{equation*}
  This implies \eqref{eq:half-upper-quad}--\eqref{eq:half-lower}.
\end{proof}
\begin{proof}[Proof of Theorem~\textup{\ref{thm:root-Lip}} on the half-line]
  Let $u$ be a limit of solutions from Lemma~\ref{lem:half}.
  As noted earlier, \cite{DPL87} shows that $u$ is a weak solution of \eqref{eq:main}.
  (Strictly speaking, \cite{DPL87} only treats bounded domains, but a routine cutoff argument allows us to extend the result to $I = \R_+$.)
  We now show that $u$ is strong and root-Lipschitz; this will imply its uniqueness.

  We again define a $\gamma$-dependent change of variables.
  If $\gamma = 2$, let $\zeta(y) \coloneq \e^y$.
  If $\gamma \in [0, 2),$ let $\psi \in \m{C}_\cc^\infty(\R)$ satisfy $0 \leq \psi \leq 1$, $\psi|_{(-\infty, -1]} \equiv 1$, and $\psi|_{[1, \infty)} \equiv 0$.
  Then we define $\zeta \colon \R \to \R_+$ by
  \begin{equation*}
    \zeta(y) \coloneqq \int_{-\infty}^y \left(\psi(z) \e^z + [1 - \psi(z)] \braket{z}^{\frac{2}{2 - \gamma}}\right) \d z.
  \end{equation*}

  In each case, we use $\zeta$ in \eqref{eq:transform} to define $v$.
  Again, the ratios $\zeta''/\zeta'$ and $\zeta'''/\zeta'$ are uniformly smooth, so \eqref{eq:transform-ev} satisfies Hypothesis~\ref{hyp:transformed}.
  Moreover,
  \begin{equation*}
    \zeta' \asymp \zeta \wedge \zeta^{\gamma/2}.
  \end{equation*}
  Using \eqref{eq:CoV0}--\eqref{eq:CoV2}, this implies the following sequential equivalences, each of which holds provided either side of the preceding ones does:
  \begin{align}
    u \asymp x^2 \wedge x^\gamma &\iff v \asymp 1,\label{eq:half-equiv0}\\
    |\partial_x u| \lesssim x \wedge x^{\gamma/2} &\iff |\partial_y v| \lesssim 1,\label{eq:half-equiv1}\\
    |\partial_x^2 u| \lesssim 1 &\iff |\partial_y^2 v| \lesssim 1.\label{eq:half-equiv2}
  \end{align}
  Fix $T \in (0, \kappa^{-1})$.
  By \eqref{eq:half-equiv0}, Lemma~\ref{lem:half} implies that $v \asymp 1$ on $[0, T] \times \R$.
  Moreover, the root-Lipschitz assumption in Hypothesis~\ref{hyp:init} yields $\abs{\partial_x u_0} \lesssim \sqrt{u_0} \lesssim x \wedge x^{\gamma/2}$, so by \eqref{eq:half-equiv1} we have $|\partial_y v_0| \lesssim 1$.
  Thus $v$ satisfies the hypotheses of Proposition~\ref{prop:transformed}, and $|\partial_y v| \lesssim 1$ and $|\partial_y^2 v| \lesssim t^{-1/2}$.
  Using \eqref{eq:half-equiv1} and \eqref{eq:half-equiv2}, these estimates become
  \begin{equation}
    \label{eq:half-deriv-bd}
    |\partial_x u| \lesssim x \wedge x^{\gamma/2} \And |\partial_x^2 u| \lesssim t^{-1/2} \quad \text{on } (0, T] \times \R_+.
  \end{equation}
  Hence $u$ is a strong solution on the time interval $[0, \kappa^{-1})$.
  Also, \eqref{eq:sqrt-deriv}, Lemma~\ref{lem:half}, and \eqref{eq:half-deriv-bd} imply that $u$ is root-Lipschitz.
  This completes the proof of Theorem~\ref{thm:root-Lip}.
\end{proof}

\section{Early blow-up}
\label{sec:blow-up}

To close the paper, we show that strong solutions of \eqref{eq:main} can blow up at zero sooner than one might na\"ively expect.
\begin{proof}[Proof of Theorem~\textup{\ref{thm:blow-up}}]
  Let $u_0 = \kappa x^2 + \vartheta(x)$ with $\vartheta \in \m{C}_\cc^\infty\big(\R_+; [0, \infty)\big)$.
  Let $T_*(\kappa, \vartheta)$ denote the maximal existence time of a strong solution with initial data $u_0$.
  Because $u_0$ satisfies Hypothesis~{\renewcommand\gammabind{2}\renewcommand\kappabind{\bar\kappa}\ref{hyp:init}} for some $\bar\kappa \geq \kappa$ depending on $\vartheta$, Theorem~\ref{thm:root-Lip} implies that $T_* > 0$.
  So there exists a unique strong solution $u$ on the time interval $[0, T_*)$.
  We show that for some $\vartheta,$ we have $T_* < \kappa^{-1}$.

  Given $T \in (0, T_*]$, define the nondecreasing quantity
  \begin{equation*}
    \m{Q}(T) \coloneqq \sup_{[0, T) \times \R_+} x^{-2}u(t, x),
  \end{equation*}
  noting that
  \begin{equation}
    \label{eq:quad-root-Lip}
    \sqrt{\m{Q}(T)} \leq \sup_{t \in [0, T)} \op{Lip}\sqrt{u(t, \anon)}.
  \end{equation}
  By \eqref{eq:quad-root-Lip}, the root-Lipschitz property in Theorem~\ref{thm:root-Lip} ensures that $\m{Q}(T) < \infty$ for all $T < \bar{\kappa}^{-1}$.
  Then Corollary~\ref{cor:strong-implies-RL} allows us to conclude that $\m{Q}(T) < \infty$ for all $T \in (0, T_*)$.

  Conversely, the proof of Theorem~\ref{thm:root-Lip} on the half-line shows that if $Q(T_*) < \infty$, then there exists $\kappa' \in (0, \infty)$ such that $u(T, \anon)$ satisfies Hypothesis~{\renewcommand\gammabind{2}\renewcommand\kappabind{\kappa'}\ref{hyp:init}} for all $T \in (0, T_*).$
  Then Theorem~\ref{thm:root-Lip} would provide a strong extension of $u$ past time $T_*$, contradicting its definition.
  So in fact we must have $\m{Q}(T_*) = \infty$, and the blow-up of $\m{Q}$ is equivalent to the breakdown of a strong solution.
  It now suffices to show that for some choice of $\vartheta$, $\m{Q}$ diverges before time $\kappa^{-1}$.

  To do so, we employ the change of variables $v(t, y) \coloneqq \e^{-2y} u(t, \e^y) - (\kappa^{-1} - t)^{-1}$ for $y \in \R$.
  Then $v(0, y) = \theta(y) \coloneqq \e^{-2y} \vartheta(\e^y) \in \m{C}_\cc^\infty(\R)$.
  Let $a(t) \coloneqq (\kappa^{-1} - t)^{-1}$.
  Our modulated solution $v$ satisfies a variation on \eqref{eq:transform-ev}:
  \begin{equation}
    \label{eq:log-reprise}
    \partial_t v = \frac{1}{2} (v + a) \partial_x^2 v + \frac{3}{2} (v + a) \partial_x v + (v + 2a)v, \quad v(0, \anon) = \theta.
  \end{equation}
  We show that $v$ can blow up before $a$ does---that is, before time $\kappa^{-1}$.
  Our argument takes inspiration from Itaya, who investigated blow-up in parabolic equations with nonlinear diffusivity~\cite{I79}.

  We can check that $a(t)\e^{-y}$ is a (super)solution of \eqref{eq:log-reprise} and $0$ is evidently a (sub)solution.
  Assume that $0 \leq \theta(y) \leq a(0)\e^{-y} = \kappa \e^{-y}$.
  Then the comparison principle yields
  \begin{equation}
    \label{eq:exp-moment}
    0 \leq v(t, y) \leq a(t)\e^{-y} \quad \text{on } [0, T_*) \times \R.
  \end{equation}
  Also, $v$ is uniformly bounded by $\m{Q}(T) < \infty$ on $[0, T] \times \R$ for all $T < T_*$.
  
  Let $\varphi \coloneqq \log(1 + v/a)$, so that \eqref{eq:log-reprise} yields
  \begin{equation*}
    \partial_t \varphi = \frac{1}{2} \partial_x^2 v + \frac{3}{2} \partial_x v + v.
  \end{equation*}
  Hence for any smooth function $\rho \colon \R \to \R_+$ satisfying suitable bounds at infinity,
  \begin{equation*}
    \der{}{t} \int_{\R} \rho\varphi \d y = \frac{1}{2} \int_{\R} \left(\rho'' - 3 \rho' + 2 \rho\right) v \d y.
  \end{equation*}
  By \eqref{eq:exp-moment} and boundedness, we are free to take $\rho(y) = \lambda \e^{\lambda y}$ for any $\lambda \in (0, 1)$.
  Then
  \begin{equation}
    \label{eq:weight}
    \der{}{t} \int_{\R} \rho\varphi = \frac{1}{2}(\lambda^2 - 3 \lambda + 2) \int_{\R} \rho v.
  \end{equation}
  We further assume that $\lambda \in (0, \tfrac{1}{2}]$.
  Using the exponential bound \eqref{eq:exp-moment}, we see that
  \begin{equation*}
    \int_{\R_+} \rho \varphi \leq \lambda\int_{\R_+} \e^{y/2} \log(1 + \e^{-y}) \d y < \lambda \int_{\R_+} \e^{-y/2} \d y = 2\lambda.
  \end{equation*}
  We now integrate \eqref{eq:weight} in time and divide the spatial integrals between $\R_-$ and $\R_+$.
  Dropping one term and using the fact that $\lambda^2 - 3\lambda + 2 > \tfrac{1}{2}$ for $\lambda \in (0, \tfrac{1}{2}]$, we find
  \begin{equation}
    \label{eq:left-mass}
    \int_{\R_-} \rho \varphi(t, \anon) \geq \int_\R \rho \varphi(0, \anon) - 2\lambda + \frac{1}{4}\int_0^t \!\!\!\int_{\R_-} \rho v.
  \end{equation}
  By construction, $\rho$ has unit mass on $\R_-$.
  Thus by Jensen's inequality,
  \begin{equation}
    \label{eq:Jensen}
    \int_{\R_-} \rho \varphi = \int_{\R_-} \rho \log(1 + v/a) \leq \log\left(1 + \frac{1}{a}\int_{\R_-} \rho v\right).
  \end{equation}
  Let $J$ denote the right side of \eqref{eq:Jensen}.
  Combining \eqref{eq:left-mass} and \eqref{eq:Jensen}, we can write
  \begin{equation*}
    J(t) \geq \int_{\R} \rho \log(\kappa^{-1}\theta + 1) - 2\lambda + \frac{1}{4}\int_0^t a(s)\big[\e^{J(s)} - 1\big] \d s.
  \end{equation*}
  Let $b \coloneqq \int_{\R} \rho \log(\kappa^{-1}\theta + 1) - 2 \lambda$.
  Using $a \geq \kappa$ and $J \geq 0$, we obtain
  \begin{equation*}
    J(t) \geq b + \frac{\kappa}{4}\int_0^t \big[\e^{J(s)} - 1\big] \d s \ForAll t \in [0, T_*).
  \end{equation*}
  Let $V(t) \coloneqq \tfrac{\kappa}{4}\int_0^t \big[\e^{J(s)} - 1\big] \d s \geq 0$, so that $J \geq V + b$.
  Differentiating, we find
  \begin{equation}
    \label{eq:V-ev}
    \dot{V} = \frac{\kappa}{4}(\e^J - 1) \geq \frac{\kappa}{4}(\e^{V + b} - 1).
  \end{equation}
  Suppose $b \geq \log 2$.
  Then $\e^{V + b} \geq 2$, so $\e^{V + b} - 1 \geq \tfrac{1}{2} \e^{V + b}$.
  Hence \eqref{eq:V-ev} yields
  \begin{equation*}
    \dot{V} \geq \frac{\kappa}{8} \e^{V + b}.
  \end{equation*}
  Integrating this differential inequality and using $V(0) = 0$, we obtain
  \begin{equation*}
    \e^{V(t)} \geq \frac{8}{8 - \kappa \e^b t}.
  \end{equation*}
  This expression blows up at time $8 \kappa^{-1}\e^{-b}$, so $J$ blows up at or before this time.
  Because $\rho$ is integrable on $\R_-$, we must have $\sup_{\R_-} v \to \infty$ and hence $\m{Q} \to \infty$ if $J \to \infty$.
  Because $\m{Q}$ blows up precisely at time $T_*,$ it therefore follows (still under the assumption that $b\ge\log 2$) that
  \begin{equation}
    \label{eq:blow-up}
    T_* \leq \frac{8}{\kappa \e^b} = \frac{8}{\kappa} \exp\left(2\lambda - \int_{\R} \rho \log(\kappa^{-1}\theta + 1)\right).
  \end{equation}
  Recall that our only constraint on the bump function $\theta$ is $0 \leq \theta(y) \leq \kappa\e^{-y}$.
  Taking a sequence of such $\theta$s that locally converge to $\kappa \e^{-y}$, we see that
  \begin{equation*}
    \sup_{\text{valid } \theta} \int_{\R} \rho \log(\kappa^{-1}\theta + 1) \geq \lambda \int_{\R_-} \e^{\lambda y} \log(\e^{-y} + 1) \d y > \lambda \int_{\R_-} \e^{\lambda y} \abs{y} \d y = \frac{1}{\lambda}.
  \end{equation*}
  So there exists $\theta_\lambda \in \m{C}_\cc^\infty(\R)$ such that $0 \leq \theta_\lambda \leq \kappa\e^{-y}$ and $\int_{\R} \rho \log(\kappa^{-1}\theta_\lambda + 1) = \lambda^{-1}$.
  With this initial condition, we have
  \begin{equation*}
    b_\lambda \coloneqq \int_{\R} \rho \log(\kappa^{-1}\theta_\lambda + 1) - 2\lambda = \frac{1}{\lambda} - 2 \lambda.
  \end{equation*}
  We can therefore choose $\lambda \in (0, \tfrac{1}{2}]$ sufficiently close to $0$ that $b_\lambda \geq \log 16 > \log 2.$
  Then \eqref{eq:blow-up} implies $T_* \leq \tfrac{1}{2\kappa}$.
  Moreover, because $\m{Q}(T_*) = \infty$, \eqref{eq:quad-root-Lip} implies \eqref{eq:not-root-Lip}.
\end{proof}

\printbibliography
\end{document}